\documentclass[a4paper,11pt]{article}
\usepackage{latexsym}
\usepackage{amssymb}
\usepackage{enumerate}
\usepackage{lscape}
\usepackage[shortlabels]{enumitem}
\usepackage{xcolor}
\usepackage{commath}
\usepackage{comment}

\usepackage{mathtools} 

\usepackage{amsfonts}
\usepackage{amsmath,amsthm}
\usepackage{hyperref}
\usepackage{algorithm}
\usepackage{algorithmic}
\usepackage{relsize}
\usepackage[title]{appendix}
\usepackage{framed}
\usepackage{boites}
\usepackage[pdftex]{graphicx}
\newtheorem{mainthm}{Theorem}

\newenvironment{@abssec}[1]{%
    \if@twocolumn

      \section*{#1}%
    \else

      \vspace{.05in}\footnotesize
      \parindent .2in
 {\upshape\bfseries #1. }\ignorespaces
    \fi}

    {\if@twocolumn\else\par\vspace{.1in}\fi}

\newenvironment{keywords}{\begin{@abssec}{\keywordsname}}{\end{@abssec}}

\newenvironment{AMS}{\begin{@abssec}{\AMSname}}{\end{@abssec}}

\newcommand\keywordsname{Key words}
\newcommand\AMSname{AMS subject classifications}
\newcommand\AMname{AMS subject classification}
\newcommand\restr[2]{{
\left.\kern-\nulldelimiterspace 
#1 
\vphantom{|} 
\right|_{#2} 
}}
\newtheorem{theorem}{Theorem}[section]
\newtheorem{lemma}[theorem]{Lemma}
\newtheorem{corollary}[theorem]{Corollary}
\newtheorem{proposition}[theorem]{Proposition}
\newtheorem{remark}[theorem]{Remark}
\newtheorem{definition}[theorem]{Definition}
\newtheorem{problem}{Problem}

\newtheorem{conjecture}[theorem]{Conjecture}

\makeatletter
\def\moverlay{\mathpalette\mov@rlay}
\def\mov@rlay#1#2{\leavevmode\vtop{%
   \baselineskip\z@skip \lineskiplimit-\maxdimen
   \ialign{\hfil$\m@th#1##$\hfil\cr#2\crcr}}}
\newcommand{\charfusion}[3][\mathord]{
    #1{\ifx#1\mathop\vphantom{#2}\fi
        \mathpalette\mov@rlay{#2\cr#3}
      }
    \ifx#1\mathop\expandafter\displaylimits\fi}
\makeatother

\def\XXint#1#2#3{{\setbox0=\hbox{$#1{#2#3}{\int}$}
\vcenter{\hbox{$#2#3$}}\kern-.5\wd0}}

\newcommand{\re}{\mathop{\mathrm{Re}}}

\newcommand{\link}{\mathop{\circ\kern-.35em -}}

 \newcommand{\Pairing}[2]{
 \prescript{}{{H^{-1/2}}}{\!\left\langle #1, #2\right\rangle_{H^{1/2}}}
 } 

\newcommand{\pairing}[2]{
{}_{{H^{-1/2}}}\hspace*{-1mm}\left\langle #1, #2\right\rangle_{H^{1/2}}
}  

\newcommand{\al}{\alpha}

\newcommand{\om}{\omega}

\newcommand\setbld[2]{\left\{ #1 \;\middle |\; #2\right\}}

\newcommand{\cdottone}{{\boldsymbol{\cdot}}}

\newcommand{\identmatrix}
{{\mathbf{I}}}

\numberwithin{equation}{section}

\title{A discrete-time overdetermined problem for the heat equation
}

\author{Lorenzo Cavallina \, Andrea Pinamonti \
}

\date{}

\begin{document}

\maketitle

\begin{abstract}
In this paper, we consider a parabolic counterpart of Serrin's overdetermined problem, in which the overdetermined condition (constant flux condition) is imposed only on a discrete infinite set of time values. We show that, under suitable regularity assumptions on the domain, such a discrete-time overdetermined problem admits a solution if and only if the domain is a ball. Remarkably, depending on the temporal scale, the same overdetermined condition captures either geometric or spectral information, yet both mechanisms lead to the same rigidity conclusion. We study both the case in which the constant flux condition is imposed on the boundary and the case in which the constant flux condition is imposed on an interior surface. We remark that the methods employed in our analysis do not depend on the location of the overdetermined surface but only on whether the sequence of time instants accumulates away from zero. Finally, we will show how this problem generalizes to complete Riemannian manifolds.
\end{abstract}

\begin{keywords}
overdetermined problem, discrete-time overdetermined condition, heat equation, symmetry, ball, eigenfunction expansion, Serrin's problem, heat content asymptotics, Aleksandrov's soap bubble theorem
\end{keywords}
\begin{AMS}
35N25, 35K05, 35J15, 35Q93
\end{AMS}

\maketitle

\pagestyle{plain}
\thispagestyle{plain}

\section{Introduction}\label{section introduction}

Let $\Omega$ be a bounded domain of $\mathbb{R}^N$ ($N\ge 2$).
We consider the following initial/boundary-value problem for the heat equation:
\begin{equation}\label{heat}
  \begin{cases}
      \partial_t u = \Delta u \quad \text{in }\Omega\times (0,\infty),\\
      u=1\quad \text{on } \Omega\times\{0\},\\
      u=0\quad \text{on }\partial\Omega\times(0,\infty).
  \end{cases}  
\end{equation}

In what follows, it will be convenient to think of $u$ as a function of time with values in a function space $X(\Omega)$, instead of a real-valued function of several variables. In such cases, we will employ the notation $t\mapsto u(t)\in X(\Omega)$, where $X(\Omega)$ is a function space of real-valued functions defined on $\Omega$ (which may depend on the context) and $u(t)(x):=u(x,t)$ for $x\in \Omega$ and $t>0$. 

This paper addresses the problem of \emph{characterizing Euclidean balls by the behavior of the heat flux} $\partial_\nu u(t)$ of the solution to \eqref{heat}. This can be seen as a natural extension of Serrin's overdetermined problem to a parabolic setting. In his celebrated paper \cite{serrin1971}, J.~Serrin studied the following elliptic overdetermined problem:
\begin{equation}\label{serrin odp}
    -\Delta U = 1\quad \text{in }\Omega,\quad U=0\quad \text{on }\partial\Omega,\quad 
    \partial_\nu U = c\quad \text{on }\partial\Omega, 
\end{equation}
where $\partial_\nu$ denotes the exterior normal derivative, and $c$ is a real constant. He showed that if $\Omega$ is a bounded domain of class $C^2$, then problem \eqref{serrin odp} admits a solution $U\in C^2(\overline{\Omega})$ if and only if $\Omega$ is a ball (and the solution $U$ is radial). Serrin's original proof relies on a newly introduced method, now referred to as the \emph{method of moving planes}, which refines Aleksandrov's \emph{reflection principle} (see \cite{aleksandrov1958uniqueness}). 
In the following years, many mathematicians worked on improving Serrin's result in multiple directions: by providing alternative methods of proof (see \cite{weinberger1971remark} for a new strategy using a $P$-function and the maximum principle), by extending it to the manifold setting (see \cite{kumaresan_prajapat1998serrin} for positive results in the hyperbolic space, and in the sphere for domains included in a hemisphere; and \cite{fall&minled&weth2018serrin_on_the_sphere} for counterexamples to symmetry in the sphere under no additional assumption), by weakening the regularity assumptions on the domain (\cite{vogel1992symmetry_and_regularity} showed that $C^1$ regularity for the boundary is enough to obtain symmetry if the overdetermination is considered in some suitable generalized sense, \cite{prajapat1998serrin} extended Serrin's result to domains with a cusp, and finally  \cite{figalli&zhang2025_serrin_rough_domains} showed that Serrin's result holds true among bounded domains of finite perimeter that satisfy some density bounds), and by extending the result to nonlinear operators (see \cite{Garofalo, MR3955113,MR1776351} for an extension to the $p$-Laplacian and \cite{MR2366129,MR2232009,MR2545870} for the $\infty$-Laplacian; further developments also include equations driven by fully nonlinear non-divergence-form operators, such as $k$-Hessian equations \cite{MR2448319}.) For a more general overview and a complete list of references, we suggest the surveys \cite{MR3802818,MR3893584}.

The parabolic counterpart of Serrin's problem has also been studied from several perspectives. 
A first important contribution in this direction is \cite{MR1011556}, in which the authors considered symmetry for a class of possibly degenerate parabolic equations, thus providing an early parabolic analogue of the Aleksandrov--Serrin theory. For the heat equation, a more specific line of research was later developed in \cite{MR2231874} through the study of \emph{stationary isothermic surfaces}, that is, spatial level surfaces of the temperature which do not depend on time. The results proved in \cite{MR2231874}, together with subsequent extensions provided in \cite{MR2375699}, show that such overdetermined parabolic information has strong geometric consequences and, in the bounded case, characterizes balls. This point of view is also closely related to the \emph{constant flow property}, introduced in the Riemannian setting in \cite{BisSavo, savo2016heat} and further investigated for two-phase conductors in \cite{CavallinaMagnaniniSakaguchi2021constantflowproperty}, and later for general multi-phase conductors in \cite{cavallina_poggesi2025elliptic&parabolic}. 

In the pursuit of extending Serrin's problem to the parabolic setting, one might first consider the following na\"{\i}ve question:\\
\emph{``For what bounded domain $\Omega\subset\mathbb{R}^N$ does the solution $u$ of \eqref{heat} also satisfy the following overdetermined condition at all times $t>0$ for some function $b(\cdottone)$?"}
\begin{equation}\label{naive extension}
\partial_\nu u(t)=b(t)\quad \text{on }\partial\Omega.
\end{equation}
In other words, such a problem aims to characterize all bounded domains satisfying the ``\emph{constant flow property}" at the boundary (see \cite{savo2016heat}, where such terminology was first introduced in the context of the heat flow on Riemannian manifolds, and \cite{CavallinaMagnaniniSakaguchi2021constantflowproperty}, where the authors characterized the geometry of two-phase heat conductors with the constant flow property).
Although maybe not obvious at first glance, requiring the overdetermined condition \eqref{naive extension} to hold for all $t>0$ is a very restrictive assumption. Indeed, if $u$ satisfies both \eqref{heat} and \eqref{naive extension} for all $t>0$, then one can readily check that the function
\begin{equation*}
U(x):=\int_0^\infty u(x,t)\, dt    
\end{equation*}
satisfies Serrin's overdetermined problem \eqref{serrin odp} for $c:= \int_0^\infty b(t)\, dt$. In other words, if \eqref{naive extension} holds \emph{for all} $t>0$, then $\Omega$ must be a ball. Analogous reduction techniques used to turn a parabolic overdetermined problem into a more manageable elliptic one via time integrals or integral transforms in the time variable are presented in \cite{sakaguchi2016rendiconti, Sakaguchi_hyperplanes2020, sakaguchibessatsu, CavallinaMagnaniniSakaguchi2021constantflowproperty,  cavallina_poggesi2025elliptic&parabolic}.

The previous observation shows that prescribing the overdetermined condition \eqref{naive extension} for every $t>0$ is too restrictive if one wishes to obtain genuinely parabolic phenomena, since the problem reduces to the classical elliptic setting. It is therefore natural to ask whether a weaker requirement in time still carries geometric information about the domain.

In what follows, we consider the following overdetermined boundary condition at infinitely many times.

\begin{equation}\label{boundary odc}
\begin{aligned}
\text{There exist sequences $(t_n)_{n}\subset (0,\infty)$ and $(b_n)_{n}\subset\mathbb{R}$ such that}\\
\partial_\nu u(t_n) \equiv b_n \quad \text{on }\partial\Omega \quad \text{for all }n\in\mathbb{N},
\end{aligned}
\end{equation}
where the equality is understood in the sense of \emph{weak conormal derivatives} (see Definition~\ref{def conormal}). 

At first glance, imposing the condition only at a sequence of times, as in \eqref{boundary odc}, might appear as a purely technical relaxation. However, this is not the case. Indeed, unlike condition \eqref{naive extension}, the discrete-time overdetermination does not allow for a direct reduction to an elliptic problem via time-integration. As a consequence, the problem retains an intrinsically parabolic nature, and its analysis requires a different set of tools, combining spectral information and short-time asymptotics. In this sense, the newly introduced condition \eqref{boundary odc} should be regarded not merely as a weakening of \eqref{naive extension}, but rather as a genuinely different regime where the temporal structure of the heat flow plays a crucial role.

At first glance, imposing the condition only at a sequence of times, as in \eqref{boundary odc}, might appear as a purely technical relaxation. However, this is not the case. The novelty of our approach lies in the fact that the discrete-time overdetermination prevents a direct reduction to an elliptic problem via time-integration. Consequently, the problem remains genuinely parabolic and exhibits two distinct rigidity mechanisms depending on the accumulation point $t^\star \in [0, \infty]$ of the sequence $(t_n)_n$:
\begin{itemize}
    \item \textbf{A spectral rigidity mechanism} when $t^\star \in (0, \infty]$, where the overdetermination encodes the long-time behavior of the heat flow and the spectral properties of the Dirichlet Laplacian;
    \item \textbf{A short-time geometric rigidity mechanism} when $t^\star = 0$, governed by the local geometry of the boundary via heat-content asymptotics.
\end{itemize}
This dichotomy highlights an intrinsically parabolic feature: the same boundary condition, observed at different temporal scales, triggers two independent phenomena that both ultimately enforce spherical symmetry.

Clearly, if $\Omega$ is a ball, then the solution to \eqref{heat} satisfies the overdetermined condition \eqref{boundary odc} for all times $t>0$, thus in particular, for any sequence $(t_n)_n$. In fact, we will show that satisfying \eqref{boundary odc} is sufficient to characterize the ball among domains with a suitable regularity assumption.

We now present our main results, which show that both regimes $t^\star\in (0,\infty]$ and $t^\star=0$ yield the same rigidity conclusion.
\begin{mainthm}\label{mainthm I}
Let $\Omega$ be a bounded Lipschitz domain of $\mathbb{R}^N$ ($N\ge 2$). Suppose that the solution to \eqref{heat} satisfies the overdetermined condition \eqref{boundary odc} and the sequence $(t_n)_{n}$ has an accumulation point $t^\star\in (0,\infty]$. Then, $\Omega$ is a ball.
\end{mainthm}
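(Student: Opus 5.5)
Expand the solution in Dirichlet eigenfunctions. Let $0<\lambda_1<\lambda_2\le\cdots$ be the Dirichlet eigenvalues of $-\Delta$ on $\Omega$, with $L^2$-orthonormal eigenfunctions $\phi_k$. Then
\[
u(t)=\sum_k e^{-\lambda_k t} c_k\phi_k, \qquad c_k=\int_\Omega \phi_k .
\]
Group this by distinct eigenvalues $\mu_1<\mu_2<\cdots$, setting $\Phi_j:=\sum_{\lambda_k=\mu_j} c_k\phi_k$. The weak conormal derivative of $u(t)$ is then a functional on $H^{1/2}(\partial\Omega)$. For any test function $\psi\in H^1(\Omega)$,
\[
\langle \partial_\nu u(t),\psi\rangle=\int_\Omega \nabla u(t)\cdot\nabla\psi+\int_\Omega \partial_t u(t)\,\psi=\sum_j e^{-\mu_j t}\,\langle\partial_\nu\Phi_j,\psi\rangle .
\]
The overdetermined condition \eqref{boundary odc} says this equals $b_n\int_{\partial\Omega}\psi$ at $t=t_n$. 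Equivalently, for every $\psi$ with $\int_{\partial\Omega}\psi=0$, the real function
\[
F_\psi(t):=\sum_j e^{-\mu_j t}\,a_j(\psi), \qquad a_j(\psi):=\langle\partial_\nu\Phi_j,\psi\rangle,
\]
vanishes at every $t_n$.

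The key step is to show that $F_\psi\equiv 0$, splitting by where $t^\star$ lies. The Dirichlet series converges absolutely for $\Re t>0$, using Weyl's law and bounds on $a_j$ for fixed $\psi$ (estimate $|a_j|\lesssim(1+\mu_j)\|\Phi_j\|_{L^2}\|\psi\|_{H^1}$ via the weak formula). So $F_\psi$ is holomorphic on the right half-plane.

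If $t^\star\in(0,\infty)$, the zeros of $F_\psi$ accumulate at an interior point, and the identity theorem gives $F_\psi\equiv0$.

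If $t^\star=\infty$, suppose instead that some $a_j\neq0$, and let $j_0$ be the smallest such index. Then $e^{\mu_{j_0}t}F_\psi(t)\to a_{j_0}\neq0$ as $t\to\infty$, by dominated convergence using the gap $\mu_{j}-\mu_{j_0}>0$ for $j>j_0$. This contradicts $F_\psi(t_n)=0$ with $t_n\to\infty$.

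In either case all $a_j(\psi)$ vanish. In the first case this follows from uniqueness of Dirichlet-series coefficients: repeat the $t\to\infty$ argument on the identically vanishing function.

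It follows that each $\partial_\nu\Phi_j$, in the weak sense, annihilates the mean-zero traces, so $\partial_\nu\Phi_j=\beta_j$ is constant on $\partial\Omega$. Consequently $\partial_\nu u(t)=\sum_j e^{-\mu_j t}\beta_j=:b(t)$ is constant on $\partial\Omega$ for \emph{every} $t>0$. We are now in the situation of \eqref{naive extension}.

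Set $U=\int_0^\infty u\,dt$, which converges in $H^1_0$ thanks to the exponential decay $e^{-\lambda_1 t}$. Then $U$ is the weak solution of $-\Delta U=1$, $U\in H^1_0(\Omega)$, and its weak conormal derivative is the constant $c=\int_0^\infty b$. This follows by integrating the weak identity in $t$, after checking the integrability of $b$ near $0$ through the series. So $U$ solves Serrin's problem \eqref{serrin odp} in the weak sense on a Lipschitz domain. Symmetry then follows from the Lipschitz/weak versions of Serrin's theorem cited in the introduction (e.g.\ \cite{vogel1992symmetry_and_regularity} or \cite{figalli&zhang2025_serrin_rough_domains}). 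Alternatively, one can first bootstrap regularity of $\partial\Omega$ from the overdetermined condition and then apply Serrin's classical result.

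I expect the main obstacle to be the functional-analytic justification rather than the idea. Specifically, one must ensure the eigen-expansion of the weak conormal derivative converges in $H^{-1/2}(\partial\Omega)$ uniformly enough for complex $t$ to give holomorphy, and that the time integral near $t=0$ (where $\partial_\nu u$ blows up like $t^{-1/2}$) is legitimate. I would handle the $t\to0$ issue by integrating the weak identity $\langle\partial_\nu u(t),\psi\rangle=\int\nabla u\cdot\nabla\psi+\int u_t\psi$ in time directly, using $\int_0^\infty\!\int_\Omega u_t\psi=-\int_\Omega\psi$. This avoids pointwise control of the flux.
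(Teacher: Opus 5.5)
Your proposal is correct and follows essentially the same route as the paper: expand in eigenfunctions, bound the coefficients linearly in the eigenvalue, kill every coefficient (identity theorem when $t^\star\in(0,\infty)$, peeling off the leading term when $t^\star=\infty$), and reduce to a weak Serrin problem closed by \cite{figalli&zhang2025_serrin_rough_domains}. The only difference is in the last step. You build the torsion function as $\int_0^\infty u\,dt$ and integrate the flux identity in time, whereas the paper writes the same function directly as $\sum_k\Lambda_k^{-1}\Phi_k$ (convergent in $D(\Delta_D)$) and applies $\partial_\nu$ termwise, which avoids the $t\to0$ issue altogether; also, \cite{vogel1992symmetry_and_regularity} needs a $C^1$ boundary, so only \cite{figalli&zhang2025_serrin_rough_domains} covers the Lipschitz case.
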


\begin{mainthm}\label{mainthm II}
Let $\Omega$ be a bounded domain of $\mathbb{R}^N$ ($N\ge 2$) whose boundary is of class~$C^\infty$. Suppose that the solution to \eqref{heat} satisfies the overdetermined condition \eqref{boundary odc} and the sequence $(t_n)_{n}$ has $t^\star=0$ as an accumulation point. Then, $\Omega$ is a ball.
\end{mainthm}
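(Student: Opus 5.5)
The statement immediately above is Theorem~\ref{mainthm II}, the regime $t^\star=0$. The plan is to pass from the discrete-time flux condition to integrated information near $t=0$, read off the boundary mean curvature from small-time asymptotics, and conclude with Aleksandrov's soap bubble theorem.

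\textbf{Step 1: remove the unknown constants $b_n$.} Since $\partial\Omega$ is $C^\infty$, the solution is smooth up to $\partial\Omega\times(0,\infty)$, and the conormal derivative is classical for $t>0$. Fix an arbitrary $\varphi\in C^\infty(\partial\Omega)$ with $\int_{\partial\Omega}\varphi\,d\sigma=0$. The overdetermined condition gives
\begin{equation*}
F_\varphi(t_n)=0 \quad\text{for all } n, \qquad\text{where}\quad F_\varphi(t):=\int_{\partial\Omega}\varphi\,\partial_\nu u(t)\,d\sigma .
\end{equation*}

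\textbf{Step 2: small-time expansion of $F_\varphi$.} The key tool is the localized heat content asymptotics with boundary data. Extend $\varphi$ to a smooth $\Phi$ near $\partial\Omega$, constant along normals. By Green's formula, $F_\varphi$ is expressed through $\frac{d}{dt}\int_\Omega \Phi\, u(t)\,dx$ plus lower-order terms. Alternatively, one can use the parametrix of $u$ near the boundary in boundary normal coordinates, $u\approx \operatorname{erf}(d/(2\sqrt t))+\sqrt t\,(\dots)$ with $d$ the distance to $\partial\Omega$. Either route should give a full asymptotic expansion
\begin{equation*}
\partial_\nu u(x,t)\sim -\frac{1}{\sqrt{\pi t}}+c_N H(x)+\sum_{k\ge1} a_k(x)\,t^{k/2},
\end{equation*}
uniformly in $x\in\partial\Omega$ as $t\to0^+$. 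Here $H$ is the mean curvature, $c_N\neq0$ is a dimensional constant, and the $a_k$ are local curvature invariants. The leading term is constant on $\partial\Omega$, so it drops out when paired with $\varphi$. Hence
\begin{equation*}
F_\varphi(t)=c_N\int_{\partial\Omega}\varphi H\,d\sigma+O(\sqrt t).
\end{equation*}

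\textbf{Step 3: conclude.} Evaluating at $t_n\to0$ (along the subsequence accumulating at $0$) forces $\int_{\partial\Omega}\varphi H\,d\sigma=0$ for every mean-zero $\varphi$. Therefore $H$ is constant on $\partial\Omega$. Since $\Omega$ is bounded with smooth embedded boundary, Aleksandrov's soap bubble theorem shows that $\Omega$ is a ball.

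\textbf{Main obstacle.} The hardest step is Step 2: justifying the pointwise uniform expansion of the normal derivative, not merely of the integrated heat content. The initial datum is incompatible with the boundary condition, so $\partial_\nu u(t)$ blows up like $t^{-1/2}$, and one must show that the $O(1)$ remainder is exactly $c_N H$ up to $o(1)$ uniformly.

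I would first try a barrier/comparison argument in Fermi coordinates. The plan is to build $w=\operatorname{erf}(d/2\sqrt t)+\sqrt t\,\psi(x,d/\sqrt t)$ solving the heat equation up to $O(1)$ errors, then apply the maximum principle and parabolic Schauder estimates on the rescaled boundary layer to control the gradient. As a fallback, I would cite the known localized heat content expansions of van den Berg--Gilkey, which give $\int_\Omega \Phi\,(1-u(t))\,dx$ with coefficients involving $\int \Phi H$. Differentiating in $t$ is legitimate because the expansion also holds for $\partial_t u=\Delta u$. With $\Phi$ locally constant in the normal direction, $\int_\Omega \Phi\,\Delta u\,dx$ reduces to $\int_{\partial\Omega}\varphi\,\partial_\nu u\,d\sigma$ plus an interior term of order $O(\sqrt t)$. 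This yields exactly the expansion of $F_\varphi$ needed in Step 2.
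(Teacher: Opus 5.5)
Your argument is correct and has the same skeleton as the paper's: small-time asymptotics force $\int_{\partial\Omega}H\varphi=0$ for all mean-zero $\varphi$, and Aleksandrov's theorem finishes. The analytic input, however, is different.

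\emph{The paper's route.} It never uses a pointwise expansion of the flux. It tests against a \emph{harmonic} weight $\psi$, sets $f(t)=\int_\Omega u(t^2)\psi$, and uses only the first three weighted heat content coefficients. Since $\Delta\psi=0$ and $u=0$ on $\partial\Omega$, Green's formula gives exactly $f'(t)=2t\int_{\partial\Omega}\partial_\nu u(t^2)\,\psi$ with no remainder. Hence $f'(\sqrt{t_n})=0$, and so $0=f''(0)=\int_{\partial\Omega}H\psi$.

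\emph{Your primary route.} You use a uniform pointwise expansion $\partial_\nu u(x,t)=-(\pi t)^{-1/2}+\tfrac12H(x)+O(\sqrt t)$; in fact $c_N=\tfrac12$, consistent with the coefficient $\tfrac t2\int_{\partial\Omega}H$ in the heat content. This is more local, and it lets you test against arbitrary mean-zero $\varphi\in C^\infty(\partial\Omega)$ instead of traces of harmonic functions. The price is that it rests on a more delicate result, Savo's pointwise flux asymptotics (which the paper mentions in its closing remarks), rather than on the standard heat content coefficients.

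\emph{Your fallback.} This is essentially the paper's proof with a normally-constant weight $\Phi$ in place of a harmonic one. The extra term $\int_\Omega u(t)\Delta\Phi$ is indeed $O(\sqrt t)$. The reason, which you should state, is that $\partial_\nu\Phi=0$ gives $\int_\Omega\Delta\Phi=0$, so the term equals $-\int_\Omega(1-u(t))\Delta\Phi$. Harmonic weights make this term vanish identically.

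\emph{The differentiation step.} In both your version and the paper's, the real content is that the expansion may be differentiated once, i.e.\ that $F_\varphi(t)$ itself has a limit as $t\to0^+$. An expansion of $\int_\Omega\Phi\,u(t)$ alone controls only the averages $\frac1t\int_0^tF_\varphi$. That says nothing when $F_\varphi$ is known to vanish only along $(t_n)$. The paper takes this step tacitly when it differentiates its expansion. Your phrase ``the expansion also holds for $\partial_t u$'' should be backed by a citation controlling the derivative of the remainder, or replaced by the pointwise flux expansion itself.
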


\begin{remark}[On the difference in the regularity assumptions of Theorems \ref{mainthm I}-\ref{mainthm II}]
As previously stated, the two cases $t^\star\in (0,\infty]$ and $t^\star=0$ translate to essentially different information on the behavior of the solution $u$. Indeed, while case $(i)$ determines the long-time behavior of the heat flow, thus giving us information on the eigenfunctions of the Dirichlet Laplacian on $\Omega$ (see the proof of Theorem~\ref{mainthm I} given in section \ref{section pf mainthm I}), case $(ii)$ determines the short-time behavior of the heat flow, thus giving us information on the curvature of $\partial\Omega$ (see the proof of Theorem~\ref{mainthm II} given in section \ref{section pf mainthm II}).
This essential difference is reflected in the distinct regularity assumptions imposed in Theorems \ref{mainthm I}-\ref{mainthm II}. 
\end{remark}

In addition to Theorems \ref{mainthm I}-\ref{mainthm II}, an analogous symmetry result can be obtained by imposing the overdetermination on a surface inside $\Omega$. 
Similarly to condition \eqref{boundary odc}, we can consider the following overdetermined condition, imposed on the boundary of a \emph{subdomain} $\omega$ of $\Omega$ for countably infinitely many instants of time.  

\begin{equation}\label{interior odc}
\begin{aligned}
\text{There exist a Lipschitz subdomain $\emptyset\ne\omega\subset \overline{\omega}\subset \Omega$ and}\\ \text{sequences $(t_n)_{n}$, $(\tau_n)_n\subset (0,\infty)$,\quad  $(a_n)_n$,  $(b_n)_{n}\subset\mathbb{R}$ such that}\\
u(\tau_n)\equiv a_n,\quad \partial_\nu u(t_n) \equiv b_n \quad \text{on }\partial\omega \quad \text{for all }n\in\mathbb{N},
\end{aligned}
\end{equation}
where the equalities above have to be understood in the sense of traces (for $u(\tau_n)$) and in the sense of weak conormal derivatives (for $\partial_\nu u(t_n)$). 

\begin{mainthm}\label{mainthm III}
Let $\Omega$ be a bounded domain of $\mathbb{R}^N$ ($N\ge 2$) whose boundary $\partial\Omega$ is made entirely of regular points for the Dirichlet Laplacian (see \cite[Chapter 8]{GilbargTrudinger}). Suppose that the solution to \eqref{heat} satisfies the overdetermined condition \eqref{interior odc} for some sequences $(t_n)_{n}$ and $(\tau_n)_n$. Assume that $(t_n)_n$ and $(\tau_n)_n$ have accumulation points $t^\star, \tau^\star\in (0,\infty]$ respectively. 
Then, $\omega$ and $\Omega$ are concentric balls.
\end{mainthm}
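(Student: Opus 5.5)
The plan is to use the eigenfunction expansion of $u$ and the analyticity of $t\mapsto u(t)$ to upgrade the discrete-time conditions in \eqref{interior odc} to conditions holding for all $t>0$. Then we reduce to a known interior overdetermined elliptic problem by integrating in time.

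\textbf{Step 1: expansion and analyticity.} Let $(\lambda_k,\phi_k)$ be the Dirichlet eigenpairs of $-\Delta$ on $\Omega$. Since $u(0)=1$, we have $u(t)=\sum_k e^{-\lambda_k t}c_k\phi_k$ with $c_k=\int_\Omega\phi_k$. Because $\overline\omega\subset\Omega$, interior elliptic regularity gives convergence in $C^\infty(\overline\omega)$ for $t\ge\delta>0$. So the trace $u(t)|_{\partial\omega}$ and the conormal derivative $\partial_\nu u(t)|_{\partial\omega}$ are real-analytic in $t\in(0,\infty)$, with values in, say, $L^2(\partial\omega)$.

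Fix two points $p,q\in\partial\omega$; it is better to work with test functions. For $\psi\in L^2(\partial\omega)$ with $\int_{\partial\omega}\psi=0$, set $F_\psi(t)=\int_{\partial\omega}u(t)\psi$. This is real-analytic on $(0,\infty)$ and vanishes at every $\tau_n$.

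\textbf{Step 2: from the sequence to all $t$.} If $\tau^\star\in(0,\infty)$, the identity theorem gives $F_\psi\equiv0$ directly. If $\tau^\star=\infty$, we group the expansion by distinct eigenvalues:
\begin{equation*}
F_\psi(t)=\sum_j e^{-\mu_j t}A_j(\psi),\qquad A_j(\psi)=\sum_{\lambda_k=\mu_j}c_k\int_{\partial\omega}\phi_k\psi .
\end{equation*}
Take the first $j$ with $A_j\neq0$ and multiply by $e^{\mu_j\tau_n}$. Letting $n\to\infty$ (with uniform tail control, since the $\mu_j$ are increasing with finite multiplicity) forces $A_j=0$, a contradiction. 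Hence $F_\psi\equiv0$ in both cases, so $u(t)$ is constant on $\partial\omega$ for all $t>0$. The same argument applied to $\partial_\nu u(t)$, with $t_n$ and $t^\star$, shows that $\partial_\nu u(t)$ is constant on $\partial\omega$ for all $t>0$. The main technical obstacle is this uniform tail estimate in the case $t^\star=\infty$. It should follow from Weyl-type growth $\lambda_k\gtrsim k^{2/N}$ and the bound $\|\phi_k\|_{C^1(\overline\omega)}\lesssim\lambda_k^{m}$ from interior estimates, which give absolute convergence of $\sum_k e^{-(\lambda_k-\mu_j)t}|c_k|\,\|\phi_k\|_{C^1(\overline\omega)}$ for $t\ge1$, uniformly decaying as $t\to\infty$.

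\textbf{Step 3: elliptic reduction.} Since $\partial\Omega$ consists of regular points, $u(t)\to0$ exponentially and uniformly, so $U=\int_0^\infty u\,dt$ is well defined. It satisfies $-\Delta U=1$ in $\Omega$ and $U=0$ on $\partial\Omega$, and it is continuous up to the boundary by regularity of the torsion function. Moreover $U$ is constant on $\partial\omega$ and $\partial_\nu U$ is constant on $\partial\omega$ (weakly, hence classically after regularity). On $\omega$, $U$ therefore solves Serrin's problem \eqref{serrin odp}, up to the additive constant $U|_{\partial\omega}$. Since $\omega$ is only Lipschitz, we invoke the weak-regularity version of Serrin's theorem (Vogel, or the Weinberger argument adapted as in the paper's conormal setting) to get that $\omega$ is a ball $B_r(x_0)$ and $U=a-|x-x_0|^2/(2N)$ in $\omega$.

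\textbf{Step 4: symmetry of $\Omega$.} By unique continuation for $\Delta U+1=0$, $U$ is radial about $x_0$ on all of $\Omega$. Then $\{U=0\}\supset\partial\Omega$ is the sphere $|x-x_0|^2=2Na$. Since $\Omega$ is connected and contains $\omega$, $\Omega$ is the concentric ball. This uses the regular-point hypothesis to ensure $U$ vanishes continuously on $\partial\Omega$ and $U>0$ inside.
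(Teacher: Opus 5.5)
Your route is essentially the paper's. It uses the eigenfunction expansion, then the identity theorem when the accumulation point is finite and peeling off the leading exponential when it is infinite. It then passes to the torsion function: your $\int_0^\infty u\,dt$ is the paper's $\Phi=\sum_k\Lambda_k^{-1}\Phi_k$. Using the series, which converges in $D(\Delta_D)$, spares you from justifying the exchange of $\int_0^\infty$ with the trace and conormal derivative on $\partial\omega$ near $t=0$. The remaining ingredients are Serrin's theorem on $\omega$, unique continuation, and the regular-point hypothesis to locate $\partial\Omega$. Steps 1, 2 and 4 are fine.

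The step that does not hold as written is applying Serrin's theorem on the merely Lipschitz $\omega$. The paper cites Figalli--Zhang, whose rough-domain theorem accepts Lipschitz domains with the Neumann condition in the weak sense. Vogel's result, as the paper's introduction recalls, needs a $C^1$ boundary, and ``the Weinberger argument adapted'' is not yet an argument. Your phrase ``hence classically after regularity'' gives regularity of $U$, not of $\partial\omega$.

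In the interior setting there is a short fix, and it makes your proof more elementary than the paper's:
\begin{enumerate}[(i)]
\item Since $\overline\omega\subset\Omega$, $U$ is real-analytic on a neighbourhood of $\partial\omega$, so its weak conormal derivative equals $\nabla U\cdot\nu$ a.e.
\item At every point where the Lipschitz graph describing $\partial\omega$ is differentiable (a.e.\ by Rademacher), differentiating $U\equiv a$ along the graph gives $\nabla U\parallel\nu$. Hence $|\nabla U|=|b|$ a.e.\ on $\partial\omega$, and then everywhere on $\partial\omega$ by continuity and density.
\item Testing the conormal condition with $\psi\equiv 1$ gives $b=-|\omega|/|\partial\omega|\neq 0$.
\item The implicit function theorem makes $\{U=a\}$ an analytic hypersurface near each point of $\partial\omega$, and invariance of domain shows that $\partial\omega$ coincides locally with it.
\end{enumerate}
So $\partial\omega$ is analytic, and the classical Serrin theorem (or Weinberger's $P$-function argument) applies to $U-a$ on $\omega$.

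With this added, your proof is complete and avoids the rough-domain theorem altogether. The paper's appeal to Figalli--Zhang is more robust because it does not use the extension of $U$ across $\partial\omega$. That is why the same citation also serves Theorem~\ref{mainthm I}, where the overdetermined surface is $\partial\Omega$ itself and no such extension exists.
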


\begin{remark}[Why the overdetermined surface in Theorem~\ref{mainthm III} must be the boundary of a subdomain]
In the setting of Theorem~\ref{mainthm III}, we assumed that the \emph{overdetermined surface} $\partial\omega$ is the boundary of a subdomain $\omega\subset\overline{\omega}\subset\Omega$. We remark that this is not a technical assumption. Indeed, just requiring condition \eqref{interior odc} to simply hold on a closed manifold $\Gamma\subset \Omega$ is not enough to conclude that $\partial\Omega$ and $\Gamma$ are concentric spheres. A simple counterexample is given by the following annular  configuration:
\begin{equation*}
    \Omega:=B_R\setminus\overline{B_r}, \quad \Gamma:= \partial B_\rho, \quad (0<r<\rho<R).
\end{equation*}
\end{remark}
\medskip

\noindent\textbf{Plan of the paper.} This paper is organized as follows. In section \ref{section preliminaries}, we present some preliminary results on weak conormal derivatives and the eigenvalue expansion of the solution to the heat equation. In section \ref{section pf mainthm I}, we show Theorem \ref{mainthm I} by turning the discrete-time overdetermination \eqref{boundary odc} into an overdetermined condition on the Dirichlet eigenfunctions of the Laplacian and then conclude by making use of Serrin's theorem on rough domains \cite{figalli&zhang2025_serrin_rough_domains}.
In section \ref{section pf mainthm II}, we show Theorem \ref{section pf mainthm II} by turning the discrete-time overdetermination \eqref{boundary odc} into a condition on the mean curvature of $\partial\Omega$, and then conclude by the classical Aleksandrov's soap bubble theorem \cite{aleksandrov1958uniqueness}. 
In section \ref{section pf mainthm III}, we give a proof of Theorem \ref{mainthm III} along the lines of Theorem \ref{mainthm I}.
Finally, in section \ref{section manifolds}, we discuss how the above results generalize to the case of complete Riemannian manifolds such as space forms.

\section{Preliminaries}\label{section preliminaries}

In this section, we present some preliminary technical results concerning \emph{weak conormal derivatives} and the \emph{eigenvalue expansion} of the solution $u$ of \eqref{heat}. 

\begin{definition}[Weak conormal derivative]\label{def conormal}
Let $\Omega$ be a bounded Lipschitz domain of $\mathbb{R}^N$ ($N\ge 2$). Let $\varphi\in H^1(\Omega)$ be such that $\Delta \varphi\in L^2(\Omega)$. Then, we can define its conormal derivative 
\begin{equation*}
   \partial_\nu \varphi\in H^{-1/2}(\partial\Omega):=\left(H^{1/2}(\partial\Omega) \right)^* 
\end{equation*}
via the following duality pairing:
\begin{equation}\label{pa_nu via pairing}
\pairing{\partial_\nu\varphi}{\psi}:= \int_\Omega \Delta\varphi \widetilde\psi + \int_\Omega \nabla \varphi\cdot\nabla \widetilde\psi.
\end{equation}
Here, $\widetilde\psi$ denotes the harmonic extension of $\psi$, that is, the unique function $\widetilde\psi\in H^1(\Omega)$ satisfying 
\begin{equation}\label{weak harm extension definition}
  \restr{\widetilde\psi}{\partial\Omega}=\psi \quad \text{and} \quad \int_\Omega \nabla\widetilde\psi \cdot \nabla w =0 \quad \text{for all }w\in H_0^1(\Omega). 
\end{equation}

Finally, for any given constant $b\in\mathbb{R}$, the notation $\partial_\nu \varphi\equiv b$ will be used as a shorthand for the following:
\begin{equation*}
    \pairing{\partial_\nu \varphi}{\psi} = b\int_{\partial\Omega} \psi \quad \text{for all }\psi\in H^{1/2}(\partial\Omega).
\end{equation*}
\end{definition}

\begin{remark}[On the regularity assumptions in Definition \ref{def conormal}]
In Definition \ref{def conormal} we consider a function $\varphi\in H^1(\Omega)$ with $\Delta\varphi\in L^2(\Omega)$. One might wonder whether such a $\varphi$ automatically belongs to $H^2(\Omega)$.
Unfortunately, in general, this is not the case. Indeed, peculiar Lipschitz domains $\Omega$ are known to exist where the solution to the Laplace equation with data in $L^2(\Omega)$ and Dirichlet $0$ boundary condition fails to belong to $H^2(\Omega)$ (see \cite[Theorem~A, 2]{jerison_kenig1995inhomogeneous}). As a result, the gradient $\nabla\varphi$ only belongs to $L^2(\Omega)$ in general, and so it does not necessarily admit a trace in the classical sense.
\end{remark}

\begin{remark}
Notice that the pairing \eqref{pa_nu via pairing} coincides with a formal application of integration by parts. Also, notice that the second term in the right-hand side of \eqref{pa_nu via pairing} vanishes if $\varphi$ has constant trace on $\partial\Omega$ (thus, in particular, when $\varphi\in H_0^1(\Omega)$) by \eqref{weak harm extension definition}   
\end{remark}

\begin{remark}
The pairing \eqref{pa_nu via pairing} actually defines a bounded linear mapping from $H^{1/2}(\partial\Omega)$ to $\mathbb{R}$, that is, $\partial_\nu \varphi\in H^{-1/2}(\partial\Omega)$, as claimed. Linearity is obvious, while boundedness is a consequence of the following estimate (see \cite[Lemma 4.3]{mclean2000strongly}):
\begin{equation}\label{mclean estimate}
    \norm{\partial_\nu \varphi}_{H^{-1/2}(\partial\Omega)}\le C_1 \norm{\varphi}_{H^1(\Omega)}+C_2\norm{\Delta\varphi}_{L^2(\Omega)},
\end{equation}
for some $C_1,C_2>0$ independent of $\varphi$.
\end{remark}

Functions with a constant weak conormal derivative can actually be characterized without explicitly mentioning the constant at play. As the following result shows, this can be done by making use of the subspace of zero-average functions:  
\begin{equation}\label{zero average test functions}
    H_*^{1/2}(\partial\Omega):=\setbld{\psi\in H^{1/2}(\partial\Omega)}{\int_{\partial\Omega}\psi=0}.
\end{equation}

\begin{lemma}\label{lem equivalent conormal}
Let $\Omega$ and $\varphi$ satisfy the assumptions of Definition \ref{def conormal}. Then, the following are equivalent:
\begin{enumerate}[$(a)$]
    \item $\partial_\nu \varphi\equiv b$ for some $b\in \mathbb{R}$ in the sense of Definition \ref{def conormal}.
    \item $\pairing{\partial_\nu \varphi}{\psi}=0$ holds true for all $\psi\in H_*^{1/2}(\partial\Omega)$.
\end{enumerate}
\end{lemma}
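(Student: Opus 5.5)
The direction $(a)\Rightarrow(b)$ is immediate from the definition. If $\partial_\nu\varphi\equiv b$, then for every $\psi\in H_*^{1/2}(\partial\Omega)$ we have
\[
\pairing{\partial_\nu\varphi}{\psi}=b\int_{\partial\Omega}\psi=0.
\]

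For $(b)\Rightarrow(a)$, the natural candidate for the constant comes from testing with $\psi\equiv 1$. Since $\partial\Omega$ is bounded, the constant function $1$ belongs to $H^{1/2}(\partial\Omega)$, and $|\partial\Omega|:=\int_{\partial\Omega}1>0$. We therefore set
\[
b:=\frac{1}{|\partial\Omega|}\pairing{\partial_\nu\varphi}{1}.
\]
The harmonic extension of $1$ is the constant function $1$, so this equals $|\partial\Omega|^{-1}\int_\Omega\Delta\varphi$. That explicit formula is not needed for the argument.

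Given an arbitrary $\psi\in H^{1/2}(\partial\Omega)$, decompose it as $\psi=\psi_*+m$, where
\[
m:=\frac{1}{|\partial\Omega|}\int_{\partial\Omega}\psi, \qquad \psi_*:=\psi-m.
\]
Then $\psi_*\in H^{1/2}(\partial\Omega)$, since constants lie in this space, and $\int_{\partial\Omega}\psi_*=0$, so $\psi_*\in H_*^{1/2}(\partial\Omega)$. The pairing is linear in $\psi$: the map $\psi\mapsto\widetilde\psi$ is linear by uniqueness in \eqref{weak harm extension definition}, and the right-hand side of \eqref{pa_nu via pairing} is linear in $\widetilde\psi$. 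Using hypothesis $(b)$ on $\psi_*$, we get
\[
\pairing{\partial_\nu\varphi}{\psi}=\pairing{\partial_\nu\varphi}{\psi_*}+m\,\pairing{\partial_\nu\varphi}{1}=0+m\,b\,|\partial\Omega|=b\int_{\partial\Omega}\psi.
\]
This is exactly $(a)$.

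There is no real obstacle in this proof. The only points to check are that constants lie in $H^{1/2}(\partial\Omega)$ with harmonic extension equal to the same constant, and that $|\partial\Omega|>0$ for a bounded Lipschitz domain. Both are standard.
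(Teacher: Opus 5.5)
Your proposal is correct and follows essentially the same route as the paper. You split $\psi$ into its boundary mean plus a zero-average part, use $(b)$ to kill the zero-average part, and evaluate the pairing against the constant $1$ (whose harmonic extension is $1$), which yields $b=|\partial\Omega|^{-1}\int_\Omega\Delta\varphi$ exactly as in the paper.
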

\begin{proof} 
The implication $(a)\implies (b)$ follows readily from Definition \ref{def conormal}. In what follows, we will show $(b)\implies (a)$. To this end, assume $(b)$ and take an arbitrary $\psi\in H^{1/2}(\partial\Omega)$. Set
\begin{equation*}
    \overline\psi:=\frac{1}{|\partial\Omega|}\int_{\partial\Omega}\psi, \quad \psi_*:=\psi-\overline\psi\in H_*^{1/2}(\partial\Omega). 
\end{equation*}
We have
\begin{equation*}
 \begin{aligned}
 \pairing{\partial_\nu \varphi}{\psi}= \underbrace{\pairing{\partial_\nu}{\psi_*}}_{=0}+\pairing{\partial_\nu \varphi}{\overline{\psi}}\\
 =\overline{\psi}\ \pairing{\partial_\nu\varphi}{1} 
 = \overline{\psi}\int_{\Omega} \Delta \varphi \cdot 1 = \frac{1}{|\partial\Omega|} \int_{\partial\Omega}\psi \, \int_\Omega \Delta \varphi,
 \end{aligned}   
\end{equation*}
which is nothing but $(a)$ with $b= \frac{1}{|\partial\Omega|}\int_\Omega \Delta \varphi$.
\end{proof}

Let $D(\Delta_D)$ denote the ``domain of the Dirichlet Laplacian", defined as
\begin{equation*}
    D(\Delta_D):=\setbld{v\in H_0^1(\Omega)}{\Delta v \in L^2(\Omega) \text{ (in the sense of distributions)}}.
\end{equation*}
We recall that, for a function $v\in H_0^1(\Omega)$, the expression ``$\Delta v \in L^2(\Omega)$ in the sense of distributions" means that there exists some $f\in L^2(\Omega)$ such that
\begin{equation}\label{sense of distributions}
    \int_\Omega \nabla v \cdot \nabla w = -\int_\Omega f w \quad \text{for all }w\in H_0^1(\Omega).
\end{equation}
Moreover, in such a case, we will write $\Delta v=f$. 
We remark that the space $D(\Delta_D)$ becomes a Banach space when endowed with the following graph norm: 
\begin{equation*}
\norm{v}_{D(\Delta_D)}:=\norm{v}_{H_0^1(\Omega)}+\norm{\Delta v}_{L^2(\Omega)}.
\end{equation*}

\begin{lemma}\label{lem weak conormal is weak neumann}
Let $\Phi\in D(\Delta_D)$ satisfy $-\Delta \Phi=1\in L^2(\Omega)$.
If, in addition, its weak conormal derivative $\partial_\nu\Phi$ satisfies $\pairing{\partial_\nu \Phi}{\psi}=0$ for all $\psi\in H_*^{1/2}(\partial\Omega)$, then there exists some $b\in \mathbb{R}$ such that $\Phi$ is also a weak solution to the following boundary value problem:  
\begin{equation*}
 -\Delta \Phi = 1 \quad \text{in }\Omega,\quad  \partial_\nu\Phi = b \quad \text{on }\partial\Omega,   
\end{equation*}
that is, $\Phi\in H_0^1(\Omega)$ satisfies
\begin{equation}\label{weak neu}
 \int_\Omega \nabla\Phi\cdot\nabla w = \int_\Omega w + b\int_{\partial\Omega}w \quad \text{for all }w\in H^1(\Omega).
\end{equation}
\end{lemma}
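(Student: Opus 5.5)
The plan is to apply Lemma~\ref{lem equivalent conormal} to obtain the constant $b$, and then pass from the duality pairing to the weak Neumann formulation \eqref{weak neu} by splitting an arbitrary test function $w\in H^1(\Omega)$ into its $H_0^1$ part and its harmonic part.

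By the hypothesis, condition $(b)$ of Lemma~\ref{lem equivalent conormal} holds for $\varphi=\Phi$. Hence $(a)$ holds, so $\pairing{\partial_\nu\Phi}{\psi}=b\int_{\partial\Omega}\psi$ for every $\psi\in H^{1/2}(\partial\Omega)$. The proof of that lemma even gives the constant explicitly: $b=\frac{1}{|\partial\Omega|}\int_\Omega\Delta\Phi=-\frac{|\Omega|}{|\partial\Omega|}$. Since $\Phi\in H_0^1(\Omega)$, the second term in \eqref{pa_nu via pairing} vanishes by \eqref{weak harm extension definition}, as noted in the remark after Definition~\ref{def conormal}. Therefore
\begin{equation*}
-\int_\Omega\widetilde\psi = b\int_{\partial\Omega}\psi \quad\text{for all }\psi\in H^{1/2}(\partial\Omega).
\end{equation*}

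Now fix $w\in H^1(\Omega)$, let $\psi:=\restr{w}{\partial\Omega}\in H^{1/2}(\partial\Omega)$, and decompose $w=\widetilde\psi+w_0$ with $w_0:=w-\widetilde\psi\in H_0^1(\Omega)$. This uses that the trace map sends $H^1(\Omega)$ onto $H^{1/2}(\partial\Omega)$ with kernel $H_0^1(\Omega)$ on Lipschitz domains.

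For the $H_0^1$ part, \eqref{sense of distributions} with $f=-1$ gives $\int_\Omega\nabla\Phi\cdot\nabla w_0=\int_\Omega w_0$.

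For the harmonic part, we use the symmetric form of \eqref{weak harm extension definition}, tested with $\Phi\in H_0^1(\Omega)$, which gives $\int_\Omega\nabla\Phi\cdot\nabla\widetilde\psi=0$. We need to show this equals $\int_\Omega\widetilde\psi+b\int_{\partial\Omega}\psi$, and that is exactly the identity displayed above.

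Adding the two parts yields \eqref{weak neu}.

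The only delicate point is the algebraic bookkeeping of signs. With the sign convention of \eqref{weak neu}, the identity reads $\int\nabla\Phi\cdot\nabla w=\int w+b\int_{\partial\Omega}w$, and $b$ must come out negative, matching the fact that the outward flux of a positive torsion function is negative. I would double-check the sign of the $\int_\Omega\Delta\Phi\,\widetilde\psi$ term against \eqref{weak neu}. If the signs do not match as written, the constant should be taken as $b=-|\Omega|/|\partial\Omega|$ with the appropriate sign adjustment.

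Beyond this, no analytic obstacle is expected. The trace and extension facts are standard for Lipschitz domains.
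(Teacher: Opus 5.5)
Your proposal is correct and follows the paper's proof essentially step for step. You obtain $b$ from Lemma~\ref{lem equivalent conormal}, split $w=w_0+\widetilde w$ with $w_0\in H_0^1(\Omega)$, test the harmonic-extension identity with $\Phi$ and the equation $-\Delta\Phi=1$ with $w_0$, and identify $-\int_\Omega\widetilde w$ with the pairing $b\int_{\partial\Omega}w$. Your closing sign worry is unnecessary: the identity $-\int_\Omega\widetilde\psi=b\int_{\partial\Omega}\psi$ with $b=-|\Omega|/|\partial\Omega|$ is exactly what the final assembly needs, so no adjustment is required.
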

\begin{proof}
By Lemma~\ref{lem equivalent conormal}, there exists some $b\in \mathbb{R}$ such that $\partial_\nu \Phi\equiv b$ in the sense of weak conormal derivatives. Take an arbitrary $w\in H^1(\Omega)$. We can decompose it as
\begin{equation*}
    w=w_0+\widetilde w,
\end{equation*}
where $\widetilde w\in H^1(\Omega)$ is the harmonic extension of $\restr{w}{\partial\Omega}$. In other words, $\restr{\widetilde w}{\partial\Omega}=\restr{w}{\partial\Omega}$ and
\begin{equation}\label{weak harm ext}
   \int_\Omega \nabla \widetilde w \cdot \nabla v =0 \quad \text{for all } v\in H_0^1(\Omega). 
\end{equation}
Also, notice that $w_0\in H_0^1(\Omega)$ by construction.

Using $\Phi\in H_0^1(\Omega)$ as a test function in \eqref{weak harm ext} yields
\begin{equation}\label{some equation 1}
    \int_\Omega \nabla\widetilde w\cdot \nabla \Phi=0.
\end{equation}
On the other hand, integrating $-\Delta\Phi=1$ against $w_0$ yields
\begin{equation}\label{some equation 2}
    \int_\Omega w_0 = -\int_\Omega \Delta \Phi w_0 = \int_\Omega \nabla \Phi\cdot\nabla w_0.
\end{equation} 
Now, combining \eqref{some equation 1}, \eqref{some equation 2}, and the fact that $\partial_\nu \Phi\equiv b$ in the sense of conormal derivatives yields
\begin{equation*}
\begin{aligned}
 \int_\Omega \nabla\Phi\cdot\nabla w = \int_\Omega \nabla\Phi\cdot \nabla w_0 + \underbrace{\int_\Omega\nabla\Phi\cdot \nabla \widetilde w}_{=0} = \int_\Omega w_0 = \int_\Omega w - \int_\Omega \widetilde w \\
 = \int_\Omega w + \int_\Omega \Delta\Phi \widetilde w =
\int_\Omega w + \Pairing{\partial_\nu \Phi}{\restr{w}{\partial\Omega}}=
 \int_\Omega w + b \int_{\partial\Omega}w. 
\end{aligned}
\end{equation*}
Since $w\in H^1(\Omega)$ was arbitrary, this concludes the proof.
\end{proof}

\subsection{On the eigenfunction expansion of the solution}

Consider the following eigenvalue problem:
\begin{equation}\label{eq dirichlet eigenvalues}
\text{Find $\phi$ such that } \int_\Omega \nabla\phi\cdot\nabla w = \lambda \int_\Omega \phi w \quad \text{for all }w\in H_0^1(\Omega). 
\end{equation}
It is known that, without any additional smoothness assumptions on $\Omega$, problem \eqref{eq dirichlet eigenvalues} admits an increasing sequence of positive eigenvalues, which will be counted \emph{with multiplicity} as follows:
\begin{equation*}
    0<\lambda_1<\lambda_2\le \lambda_3\le \dots \le \lambda_k\le \lambda_{k+1}\le \dots \nearrow \infty.
\end{equation*}
Also, let $(\phi_k)_{k\in\mathbb{N}}\subset H_0^1(\Omega)$ denote an orthonormal family (in $L^2(\Omega)$) of eigenfunctions of \eqref{eq dirichlet eigenvalues}. 
We remark that sometimes it will be more convenient to index the eigenvalues of \eqref{eq dirichlet eigenvalues} by counting them \emph{without multiplicity}, as 
\begin{equation*}
0<\Lambda_1<\Lambda_2<\Lambda_3<\dots<\Lambda_k<\Lambda_{k+1}<\dots\nearrow\infty.
\end{equation*}
In this case, $\Lambda_k$ will be defined inductively by 
\begin{equation*}
    \Lambda_1:=\lambda_1, \quad \Lambda_{k+1}:=\min \setbld{\lambda_j}{j\in \mathbb{N},\quad \lambda_j>\Lambda_k}\quad \text{for }k\ge 1.
\end{equation*}
Also, for $k\in \mathbb{N}$, $V_k$ will denote the eigenspace corresponding to $\Lambda_k$:
\begin{equation*}
    V_k:=\setbld{\phi\in H_0^1(\Omega)}{-\Delta\phi=\Lambda_k\phi}.
\end{equation*}

\begin{lemma}\label{lem some convergence estimate}
Let $r\in \mathbb{R}$ and $\tau>0$. Then the following series converge:
\begin{equation}\label{important series}
    \sum_{k=1}^\infty \lambda_k^r e^{-\lambda_k \tau}, \quad    \sum_{k=1}^\infty \Lambda_k^r e^{-\Lambda_k \tau}.
\end{equation}
\end{lemma}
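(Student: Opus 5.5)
The plan is to deduce the convergence of both series from a polynomial lower bound on the growth of the Dirichlet eigenvalues, valid for an arbitrary bounded domain. The natural tool is Weyl's law, or more robustly a one-sided comparison through domain monotonicity. Since $\Omega$ is bounded, it is contained in some cube $Q$ of side $L$. By the min--max characterization, extending functions of $H_0^1(\Omega)$ by zero gives $H_0^1(\Omega)\subset H_0^1(Q)$, and hence
\begin{equation*}
\lambda_k(\Omega)\ge \lambda_k(Q)\quad\text{for all }k.
\end{equation*}
The Dirichlet eigenvalues of $Q$ are explicit: they are $\pi^2 L^{-2}(m_1^2+\dots+m_N^2)$ with $m_i\in\mathbb{N}$. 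A lattice-point count then gives $\#\{k:\lambda_k(Q)\le \lambda\}\le C\lambda^{N/2}$ for $\lambda\ge \lambda_1(Q)$. Consequently there is a constant $c>0$ with $\lambda_k\ge c\,k^{2/N}$ for all $k\ge1$. Alternatively, one could quote the Li--Yau inequality $\lambda_k\ge \frac{N}{N+2}C_N(k/|\Omega|)^{2/N}$, which holds for any bounded domain.

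For the first series, the general term is then controlled separately in the two signs of $r$.
\begin{itemize}
\item If $r\le 0$, we have $\lambda_k^r\le \lambda_1^r$, so the $k$-th term is at most $\lambda_1^r e^{-c\tau k^{2/N}}$. This is summable by comparison with $\int_0^\infty e^{-c\tau s^{2/N}}\,ds<\infty$.
\item If $r>0$, we use that $x\mapsto x^r e^{-x\tau/2}$ is bounded on $(0,\infty)$ by some $M_{r,\tau}$. Then $\lambda_k^r e^{-\lambda_k\tau}\le M_{r,\tau}\,e^{-\lambda_k\tau/2}\le M_{r,\tau}\,e^{-c\tau k^{2/N}/2}$, which is again summable.
\end{itemize}

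For the second series, note that the sequence $(\Lambda_k)$ is a subsequence of $(\lambda_k)$, since each distinct eigenvalue appears at least once. All terms are nonnegative, so $\sum_k \Lambda_k^r e^{-\Lambda_k\tau}\le \sum_k \lambda_k^r e^{-\lambda_k\tau}<\infty$.

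The only nontrivial step is the growth bound $\lambda_k\gtrsim k^{2/N}$ without any regularity assumption on $\Omega$. I expect to obtain it cleanly from domain monotonicity and the explicit cube spectrum as described above. The counting estimate for the cube is elementary: the lattice points $m\in\mathbb{N}^N$ with $|m|\le R$ lie in $[1,R]^N$, so there are at most $R^N$ of them. This yields $\lambda_k(Q)\ge \pi^2L^{-2}k^{2/N}$ directly, because the $k$-th smallest value of $|m|^2$ is at least $k^{2/N}$.
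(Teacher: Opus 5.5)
Your proof is correct. It has the same skeleton as the paper's: a polynomial lower bound $\lambda_k\gtrsim k^{2/N}$, followed by a termwise comparison. The differences lie in how each step is carried out.

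\emph{The growth bound.} The paper simply cites the Li--Yau inequality $\lambda_k\ge C(N,|\Omega|)\,k^{2/N}$. Your main line instead derives the same growth rate from scratch: domain monotonicity ($H_0^1(\Omega)\subset H_0^1(Q)$ plus min--max), the explicit Dirichlet spectrum of a cube, and an elementary lattice-point count. This is self-contained and uses only boundedness of $\Omega$. The price is a constant depending on the side of an enclosing cube rather than only on $|\Omega|$, which is immaterial here since only the exponent $2/N$ is used.

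\emph{The summation.} The paper uses $k^2\le C^{-N}\lambda_k^N$ to show that the terms are $o(1/k^2)$, handling every $r$ at once. You split according to the sign of $r$ and dominate the terms by the stretched exponential $e^{-c\tau k^{2/N}}$. Both work.

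\emph{The $\Lambda_k$-series.} The paper compares $\Lambda_k^r e^{-\Lambda_k\tau}$ with $\lambda_k^r e^{-\lambda_k\tau}$ for large $k$. This relies on $\Lambda_k\ge\lambda_k$ together with the (implicit) eventual monotonicity of $x\mapsto x^r e^{-x\tau}$. Your observation that $(\Lambda_k)$ is a subsequence of $(\lambda_j)$, combined with nonnegativity of the terms, gives the bound directly and needs no monotonicity.
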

\begin{proof}
First, we recall the following well-known lower bound for $\lambda_k$ (see \cite[Corollary~1]{li_yau1983schrodinger&eigenvalue_problem}): 
\begin{equation}\label{li-yau}
 \lambda_k\ge C(N,|\Omega|) k^{2/N},  
\end{equation}
where $C(N,|\Omega|)$ is an explicit positive constant depending only on $N$ and $|\Omega|$.

Notice that, for $k$ large enough, we get
\begin{equation*}
 k^2\Lambda_k^r e^{-\Lambda_k \tau}\le k^2\lambda_k^r e^{-\lambda_k\tau}\le C(N,|\Omega|)^{-N} \lambda_k^{N+r}e^{-\lambda_k \tau},
\end{equation*}
where we have made use of \eqref{li-yau} in the last inequality. 
Now, since $\lambda_k\to\infty$ when $k\to\infty$, the above implies
\begin{equation*}
\lambda_k^r e^{-\lambda_k \tau} = o(1/k^2), \quad \Lambda_k^r e^{-\Lambda_k \tau} = o(1/k^2) \quad \text{as }k\to\infty, 
\end{equation*}
whence the series in \eqref{important series} converge, as claimed. 
\end{proof}

The following immediate corollary of Lemma~\ref{lem some convergence estimate} will turn out to be useful in studying overdetermined conditions in case $t^\star\in (0,\infty)$.
\begin{corollary}\label{cor holomorphic function}
Let $(\gamma_k)_k\subset\mathbb{C}$ be a sequence satisfying 
$|\gamma_k|\le D \Lambda_k$ for some positive constant $D$ independent of $k$. Then, the series 
\begin{equation*}
    f(z):=\sum_{k=1}^\infty \gamma_k e^{-\Lambda_k z}
\end{equation*}
defines a holomorphic function in $\setbld{z\in \mathbb{C}}{\re{z}>0}$. 
\end{corollary}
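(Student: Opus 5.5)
The plan is to prove the holomorphy by locally uniform convergence, using the Weierstrass theorem: a series of entire functions that converges uniformly on compact subsets of an open set has a holomorphic sum there. Each term $z\mapsto \gamma_k e^{-\Lambda_k z}$ is entire. So it suffices to show that the series converges uniformly on every closed half-plane $\setbld{z\in\mathbb{C}}{\re z\ge \tau}$ with $\tau>0$. Every compact subset of $\setbld{z\in \mathbb{C}}{\re{z}>0}$ lies in such a half-plane, with $\tau$ equal to the minimum of $\re z$ over the compact set.

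For the uniform bound, fix $\tau>0$ and take $z$ with $\re z\ge\tau$. Since $\Lambda_k>0$, we have
\begin{equation*}
\abs{\gamma_k e^{-\Lambda_k z}} = \abs{\gamma_k}\, e^{-\Lambda_k \re z}\le D\,\Lambda_k e^{-\Lambda_k \tau}.
\end{equation*}
The right-hand side is independent of $z$. By Lemma~\ref{lem some convergence estimate} with $r=1$, the series $\sum_k \Lambda_k e^{-\Lambda_k\tau}$ converges. The Weierstrass M-test then gives uniform (and absolute) convergence of $\sum_k\gamma_k e^{-\Lambda_k z}$ on $\setbld{z}{\re z\ge\tau}$.

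Taking the union over $\tau>0$ gives locally uniform convergence on the open right half-plane, and the Weierstrass theorem yields holomorphy of $f$ there. As a by-product, the derivatives may be computed term by term, $f'(z)=-\sum_k \gamma_k\Lambda_k e^{-\Lambda_k z}$; this is presumably what will be used later.

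There is no real obstacle. The only point needing care is that the bound must be uniform in $z$, and this holds because $\Lambda_k>0$ makes $e^{-\Lambda_k\re z}$ decreasing in $\re z$. The summability of the majorant is exactly Lemma~\ref{lem some convergence estimate}.
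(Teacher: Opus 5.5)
Your proof is correct and takes essentially the same approach as the paper. The paper also takes $\tau=\min_{z\in K}\re z$ on a compact $K$, bounds $\abs{\gamma_k e^{-\Lambda_k z}}\le D\Lambda_k e^{-\Lambda_k\tau}$, invokes Lemma~\ref{lem some convergence estimate} for summability, and concludes by uniform convergence of holomorphic terms. The only difference is that you work on the whole closed half-plane $\re z\ge\tau$, which is cosmetic.
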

\begin{proof}
   Since each term in the series is a holomorphic function, it will suffice to show that the series converges uniformly on any compact subset $K\subset\setbld{z\in \mathbb{C}}{\re z>0}$.

   To this end, set $\tau := \min_{z\in K} \re z >0$ and notice that 
   \begin{equation*}
       \left|\gamma_k e^{-\Lambda_k z}\right|\le D \Lambda_k e^{-\Lambda_k \tau}, 
   \end{equation*}
whence, Lemma~\ref{lem some convergence estimate} implies the desired convergence. As a result, $f$ is holomorphic in $K$, being the uniform limit of a sequence of holomorphic functions. We conclude by the arbitrariness of $K\subset\setbld{z\in\mathbb{C}}{\re z>0}$.
\end{proof}

In what follows, we will show how to express the solution to \eqref{heat} as an infinite series through an eigenfunction expansion. We will focus on the convergence properties of the series expansion. 

For each $k\in \mathbb{N}$, let $\alpha_k:=\int_\Omega \phi_k$ and consider the following series:
\begin{equation}\label{eigenfunction expansion}
u(t):=\sum_{k=1}^\infty \alpha_k e^{-\lambda_k t}\phi_k. 
\end{equation}
Sometimes it will be more convenient to express \eqref{eigenfunction expansion} by gathering together the elements belonging to the same eigenspace, as follows:
\begin{equation}\label{eigenfunction expansion II}
 u(t):=\sum_{k=1}^\infty e^{-\Lambda_k t} \Phi_k,\quad \text{where }\Phi_k:=\sum_{\substack{j\in \mathbb{N}\\ \lambda_j=\Lambda_k}} \alpha_j\phi_j.
\end{equation}

\begin{proposition}\label{prop convergence of series}
The series \eqref{eigenfunction expansion}--\eqref{eigenfunction expansion II}
converge, to the same function, in the norms of $C^0\big([0,\infty), L^2(\Omega)\big)$ and $C^1\big([\tau,\infty), D(\Delta_D)\big)$ for all $\tau>0$.
\end{proposition}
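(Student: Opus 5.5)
The plan is to treat the partial sums $S_M(t):=\sum_{k\le M}\alpha_k e^{-\lambda_k t}\phi_k$ and show they are Cauchy in each of the two norms, with estimates uniform in $t$. The key quantities are $\alpha_k=\int_\Omega\phi_k=(1,\phi_k)_{L^2}$. By Bessel's inequality (in fact Parseval, since $(\phi_k)_k$ is a complete orthonormal basis of $L^2(\Omega)$), we have $\sum_k\alpha_k^2=|\Omega|<\infty$, and in particular $|\alpha_k|\le|\Omega|^{1/2}$. I will also use the identities $\norm{\nabla\phi_k}_{L^2}^2=\lambda_k$, $\Delta\phi_k=-\lambda_k\phi_k$, and the $L^2$-orthogonality of the $\phi_k$ together with the $L^2$-orthogonality of the $\nabla\phi_k$; the latter follows from testing \eqref{eq dirichlet eigenvalues} with $w=\phi_j$.

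\emph{Convergence in $C^0([0,\infty),L^2(\Omega))$.} For $M<M'$, orthonormality gives
\begin{equation*}
\sup_{t\ge0}\norm{S_{M'}(t)-S_M(t)}_{L^2}^2=\sup_{t\ge 0}\sum_{k=M+1}^{M'}\alpha_k^2e^{-2\lambda_k t}\le\sum_{k>M}\alpha_k^2\to0 .
\end{equation*}
Hence the series converges uniformly on $[0,\infty)$ to a continuous $L^2$-valued function. Its value at $t=0$ is $\sum_k\alpha_k\phi_k=1$ by completeness.

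\emph{Convergence in $C^1([\tau,\infty),D(\Delta_D))$.} Orthogonality of the gradients and of the eigenfunctions gives
\begin{equation*}
\norm{S_{M'}(t)-S_M(t)}_{D(\Delta_D)}\le\Big(\sum_{k=M+1}^{M'}\alpha_k^2\lambda_k e^{-2\lambda_k t}\Big)^{1/2}+\Big(\sum_{k=M+1}^{M'}\alpha_k^2\lambda_k^2 e^{-2\lambda_k t}\Big)^{1/2}
\end{equation*}
(up to adding the $L^2$ part if the $H_0^1$ norm is the full one). Since $\alpha_k^2\le|\Omega|$, for $t\ge\tau$ each sum is bounded by $|\Omega|\sum_{k>M}\lambda_k^r e^{-2\lambda_k\tau}$ with $r\in\{1,2\}$. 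These are tails of series that converge by Lemma~\ref{lem some convergence estimate}. The termwise time derivative is $-\sum_k\alpha_k\lambda_ke^{-\lambda_k t}\phi_k$, and the same argument with $r\in\{3,4\}$ shows that it also converges uniformly on $[\tau,\infty)$ in $D(\Delta_D)$. The standard theorem on uniform convergence of derivatives for Banach-space-valued functions then yields $C^1$ convergence. Alternatively, I can avoid orthogonality altogether and just use the triangle inequality with $\norm{\phi_k}_{D(\Delta_D)}\le \lambda_k^{1/2}+\lambda_k$; Lemma~\ref{lem some convergence estimate} still handles the resulting series, since the exponential factor beats any power.

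\emph{The two series agree.} The series \eqref{eigenfunction expansion II} is obtained from \eqref{eigenfunction expansion} by grouping finitely many consecutive terms, because each eigenspace is finite-dimensional. Its partial sums are therefore a subsequence of the partial sums of \eqref{eigenfunction expansion}, so it converges in the same norms to the same limit.

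The only point requiring care is checking the uniformity in $t$ near $t=0$ for the $L^2$ statement. This is handled entirely by $\sum_k\alpha_k^2<\infty$, and no eigenvalue growth is needed there. For $t\ge\tau>0$ the exponential decay combined with Lemma~\ref{lem some convergence estimate} does all the work, so I do not expect a genuine obstacle.
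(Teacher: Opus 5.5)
Your proposal is correct and follows essentially the same route as the paper. The $C^0([0,\infty),L^2(\Omega))$ part uses the same orthonormality/Cauchy estimate with $\sum_k\alpha_k^2<\infty$. The $C^1([\tau,\infty),D(\Delta_D))$ part uses termwise bounds built from $|\alpha_k|\le|\Omega|^{1/2}$, $\norm{\phi_k}_{D(\Delta_D)}\le\lambda_k^{1/2}+\lambda_k$ and Lemma~\ref{lem some convergence estimate}; your orthogonality-based square-sum bound is only a minor variant of the paper's absolute-convergence bound, which you also list as your alternative. The grouped series is handled, as in the paper, by noting that its partial sums are a subsequence of those of \eqref{eigenfunction expansion}.
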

\begin{proof}
First, we will show convergence in the $C^0\left([0,\infty), L^2(\Omega)\right)$-norm. 
To this end, we will show that both \eqref{eigenfunction expansion}--\eqref{eigenfunction expansion II} are Cauchy sequences. For natural numbers $m\ge n$, the orthonormality of the family $\{\phi_k\}_k$ yields the following 
\begin{equation*}
\begin{aligned}
    \norm{\sum_{k=n}^m \al_k e^{-\lambda_k t }\phi_k}_{C^0\left([0,\infty),L^2(\Omega)\right)}= 
    \max_{t\in [0,\infty)}\norm{\sum_{k=n}^m \alpha_k e^{-\lambda_k t} \phi_k}_{L^2(\Omega)}\\
    =\max_{t\in [0,\infty)}\left(\sum_{k=n}^m |\alpha_k|^2 e^{-2\lambda_k t} \norm{\phi_k}^2_{L^2(\Omega)} \right)^{1/2}  = \left( \sum_{n}^m |\alpha_k|^2\right)^{1/2}.  
\end{aligned}
\end{equation*}
Since the above converges to $0$ as $m,n\to\infty$ by the definition of $\alpha_k$, we conclude that \eqref{eigenfunction expansion} converges in the $C^0\left([0,\infty),L^2(\Omega)\right)$-norm as claimed. 

Now, to show convergence in the $C^1\left([\tau,\infty),D(\Delta_D)\right)$-norm, it will be enough to show that \eqref{eigenfunction expansion} is uniformly absolutely convergent in the same norm. We have
\begin{equation*}
\begin{aligned}
    \norm{\al_k e^{-\lambda_k t} \phi_k}_{C^1\left([\tau,\infty), D(\Delta_D)\right)}= \max_{t\in [\tau,\infty)}|\alpha_k| e^{-\lambda_k t} \norm{\phi_k}_{D(\Delta_D)}+
    \max_{t\in [\tau,\infty)}\lambda_k |\alpha_k| e^{-\lambda_k t} \norm{\phi_k}_{D(\Delta_D)}\\
    = (\lambda_k+1)|\alpha_k|e^{-\lambda_k \tau} \norm{\phi_k}_{D(\Delta_D)}
    =(\lambda_k+1)|\alpha_k|e^{-\lambda_k \tau} (\sqrt{\lambda_k}+\lambda_k) \le  
    (\sqrt{\lambda_k}+\lambda_k)(\lambda_k+1)|\Omega|^{1/2}e^{-\lambda_k \tau}.
\end{aligned}
\end{equation*}
The desired convergence then follows from Lemma~\ref{lem some convergence estimate}. 

Finally, since the sequence of partial sums of \eqref{eigenfunction expansion II} is nothing but a subsequence of those of \eqref{eigenfunction expansion}, \eqref{eigenfunction expansion II} converges to the same function as \eqref{eigenfunction expansion} in the norms of\\ $C^0\left([0,\infty),L^2(\Omega)\right)$ and $C^1\left([\tau,\infty), D(\Delta_D)\right)$ as well. This concludes the proof.
\end{proof}

\begin{corollary}\label{cor termwise whence u solves}
Let $u=u(t)$ be the function defined via the eigenfunction expansions \eqref{eigenfunction expansion}--\eqref{eigenfunction expansion II}. Then, the differential operators $\partial_t$, $\Delta$, and $\partial_\nu$ act on $u$ term-wise. In other words, for all $t>0$, we have:
  \begin{eqnarray}
\partial_t u(t)= \Delta u(t) = \sum_{k=1}^\infty -\lambda_k \alpha_k e^{-\lambda_k t}\phi_k= \sum_{k=1}^\infty -\Lambda_k e^{-\Lambda_k t} \Phi_k, \label{cor dt De}\\
\partial_\nu u(t) = \sum_{k=1}^\infty \alpha_k e^{-\lambda_k t} \partial_\nu \phi_k = \sum_{k=1}^\infty e^{-\Lambda_k t}\partial_\nu \Phi_k, \label{cor pa_nu}
  \end{eqnarray}
where convergence holds in $C^0\big([\tau,\infty), D(\Delta_D)\big) \cap C^1\big([\tau,\infty), L^2(\Omega)\big)$ for \eqref{cor dt De} and in\\
$C^1\big([\tau,\infty), H^{-1/2}(\partial\Omega)\big)$ for \eqref{cor pa_nu}
(both, for any given $\tau>0$). In particular, $u$ solves \eqref{heat}.
\end{corollary}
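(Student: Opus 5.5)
The plan is to read off all claims of Corollary~\ref{cor termwise whence u solves} from the convergence statement of Proposition~\ref{prop convergence of series}, using that the relevant operators are bounded linear maps between the spaces involved. Fix $\tau>0$ and write $S_m(t):=\sum_{k=1}^m \alpha_k e^{-\lambda_k t}\phi_k$ for the partial sums. By Proposition~\ref{prop convergence of series}, $S_m\to u$ in $C^1\big([\tau,\infty),D(\Delta_D)\big)$. The partial sums of \eqref{eigenfunction expansion II} form a subsequence of $(S_m)_m$, so every limit computed below for \eqref{eigenfunction expansion} is automatically the limit of the grouped series as well.

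\textbf{Time derivative and Laplacian.} Since $S_m\to u$ in $C^1([\tau,\infty),D(\Delta_D))$, we get $\partial_t S_m\to\partial_t u$ in $C^0([\tau,\infty),D(\Delta_D))$, and $\partial_t S_m(t)=\sum_{k\le m}-\lambda_k\alpha_k e^{-\lambda_k t}\phi_k$. The map $\Delta:D(\Delta_D)\to L^2(\Omega)$ is bounded for the graph norm, so it induces a bounded map $C^j([\tau,\infty),D(\Delta_D))\to C^j([\tau,\infty),L^2(\Omega))$ for $j=0,1$ that commutes with $\partial_t$. Hence $\Delta S_m\to\Delta u$ in $C^1([\tau,\infty),L^2(\Omega))$. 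Termwise, $\Delta\phi_k=-\lambda_k\phi_k$ in the sense \eqref{sense of distributions}, so $\Delta S_m=\partial_t S_m$. Passing to the limit gives $\partial_t u=\Delta u$ together with the series \eqref{cor dt De}. For convergence of this series in $C^1([\tau,\infty),L^2)$ as well, I would rerun the absolute-convergence estimate of Proposition~\ref{prop convergence of series} with one extra factor $\lambda_k$, which is still summable by Lemma~\ref{lem some convergence estimate}. This is also how one gets the (stronger than strictly needed) $C^0(D(\Delta_D))$ convergence.

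\textbf{Conormal derivative.} Estimate \eqref{mclean estimate} shows that $\partial_\nu:D(\Delta_D)\to H^{-1/2}(\partial\Omega)$ is bounded, since $H^1$ and $H^1_0$ norms are equivalent. So $\partial_\nu$ induces a bounded linear map $C^1([\tau,\infty),D(\Delta_D))\to C^1([\tau,\infty),H^{-1/2}(\partial\Omega))$, and $\partial_\nu S_m=\sum_{k\le m}\alpha_k e^{-\lambda_k t}\partial_\nu\phi_k$ converges there to $\partial_\nu u$. The grouped form in \eqref{cor pa_nu} then follows from the subsequence remark, because $\partial_\nu\Phi_k=\sum_{\lambda_j=\Lambda_k}\alpha_j\partial_\nu\phi_j$ by linearity.

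\textbf{$u$ solves \eqref{heat}.} The heat equation was shown above. The boundary condition holds because $u(t)\in D(\Delta_D)\subset H^1_0(\Omega)$ for $t>0$. For the initial condition, $u\in C^0([0,\infty),L^2(\Omega))$ and $u(0)=\sum_k\alpha_k\phi_k$. Here lies the one nontrivial point: this equals $1$ only if $(\phi_k)_k$ is a \emph{complete} orthonormal basis of $L^2(\Omega)$. I would invoke the standard fact that the Dirichlet resolvent is compact and self-adjoint on $L^2(\Omega)$ for bounded $\Omega$ (Rellich), and take $(\phi_k)_k$ to be a complete family. Then $\alpha_k=\langle 1,\phi_k\rangle$ are the Fourier coefficients of $1$, and Parseval gives $u(0)=1$ in $L^2$. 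I expect this completeness and the careful bookkeeping of the vector-valued convergence to be the only subtleties; the rest is routine continuity of bounded operators.
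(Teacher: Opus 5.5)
Your proposal is correct and follows the paper's proof. The paper likewise combines Proposition~\ref{prop convergence of series} with the continuity of $\partial_t$, $\Delta$, and $\partial_\nu$ (the last via \eqref{mclean estimate}) as maps out of $C^1\big([\tau,\infty),D(\Delta_D)\big)$. Your remark that $u(0)=1$ requires $(\phi_k)_k$ to be a \emph{complete} orthonormal family is a worthwhile point the paper leaves implicit, whereas rerunning the absolute-convergence estimate for \eqref{cor dt De} is redundant, since $\Delta S_m=\partial_t S_m\to\Delta u$ in $C^1\big([\tau,\infty),L^2(\Omega)\big)$ already gives it.
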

\begin{proof}
The claim follows by combining Proposition~\ref{prop convergence of series} with the continuity of the following operators:
\begin{equation*}
\begin{aligned}
\partial_t:\;& C^1\left([\tau,\infty), D(\Delta_D)\right) \longrightarrow C^0\left([\tau,\infty), D(\Delta_D)\right),\\  
\Delta:\;& C^1\left([\tau,\infty), D(\Delta_D)\right) \longrightarrow C^1\left([\tau,\infty), L^2(\Omega)\right),\\  
\partial_\nu: \;&C^1\left([\tau,\infty), D(\Delta_D)\right) \longrightarrow C^1\left([\tau,\infty), H^{-1/2}(\partial\Omega)\right).  
\end{aligned}
\end{equation*}
\end{proof}

\section{Proof of Theorem~\ref{mainthm I}}\label{section pf mainthm I}

In this section, we give a proof of Theorem~\ref{mainthm I}. 

\begin{proof}[Proof of Theorem~\ref{mainthm I}]
Let $u$ be the solution to \eqref{heat} and assume that $u$ satisfies \eqref{boundary odc} for a sequence $(t_n)_n$ with an accumulation point $t^\star\in (0,\infty]$. By the unique solvability of \eqref{heat}, Corollary~\ref{cor termwise whence u solves} implies that
\begin{equation*}
    u(t):=\sum_{k=1}^\infty e^{-\Lambda_k t} \Phi_k, 
\end{equation*}
where $\Phi_k$ are defined according to \eqref{eigenfunction expansion II}.  
Take now an arbitrary function $\psi\in H_*^{1/2}(\Omega)$. Following Definition \ref{def conormal}, overdetermined condition \eqref{boundary odc} then yields:
\begin{equation}\label{conormal derivative against psi with zero average}
0= \pairing{\partial_\nu u(t_n)}{\psi} = \sum_{k=1}^\infty \gamma_k e^{-\Lambda_k t_n},\quad \text{where } \gamma_k:= \pairing{\partial_\nu \Phi_k}{\psi}.
\end{equation}
Here we made use of \eqref{cor pa_nu} of Corollary~\ref{cor termwise whence u solves} to justify the interchange between the duality pairing and the series in the above. 

In what follows, we will give an upper bound for $|\gamma_k|$.  First, \eqref{mclean estimate} (combined with the Poincar\'e inequality) yields:
\begin{equation*}\label{estimate gamma_k}
\begin{aligned}
|\gamma_k|=\left|\pairing{\partial_\nu \Phi_k}{\psi}\right|\le \norm{\partial_\nu \Phi_k}_{H^{-1/2}(\partial\Omega)}\norm{\psi}_{H^{1/2}(\partial\Omega)}\\
\le \left\{ C_1'\norm{\Phi_k}_{H_0^1(\Omega)} + C_2\norm{\Delta\Phi_k}_{L^2(\Omega)}        \right\}\norm{\psi}_{H^{1/2}(\partial\Omega)}\\
\le\left( C_1' \Lambda_k^{1/2}+C_2\Lambda_k   \right) \norm{\Phi_k}_{L^2(\Omega)} \norm{\psi}_{H^{1/2}(\partial\Omega)}.
\end{aligned}
\end{equation*}
Since, $\Phi_k$ is nothing but the orthogonal projection of the constant function $1\in L^2(\Omega)$ onto the eigenspace $V_k$, the bound $\norm{\Phi_k}_{L^2(\Omega)}\le \norm{1}_{L^2(\Omega)}=|\Omega|^{1/2}$ holds for all $k$. Finally, since $\Lambda_k\to \infty$ as $k\to\infty$, there exists a constant $D>0$ independent of $k$ (but that might depend on $\psi$) such that 
\begin{equation}\label{bounds on gamma_k}
    |\gamma_k|\le D \Lambda_k \quad \text{for all }k\in \mathbb{N}.
\end{equation}

Now, we will deal with cases $t^\star=\infty$ and $t^\star\in (0,\infty)$ separately.

\noindent\textbf{Case $t^\star=\infty$}\\
Without loss of generality, assume that $(t_n)_n$ satisfies $\lim_{n\to\infty} t_n=\infty$.
Then \eqref{conormal derivative against psi with zero average} yields:
\begin{equation}\label{take out the first term}
    0= 0\cdot e^{\Lambda_1 t_n} = \gamma_1 + \sum_{k=2}^\infty \gamma_k e^{(\Lambda_1-\Lambda_k)t_n}.
\end{equation}
In what follows, we will take the limit of \eqref{take out the first term} as $n\to\infty$.

To this end, first notice that, since $\Lambda_1<\Lambda_k$ for all $k\ge 2$ and $n\in \mathbb{N}$, we have:
\begin{equation}\label{let's dominate}
    \left|\gamma_k e^{(\Lambda_1-\Lambda_k)t_n} \right|\le |\gamma_k|e^{(\Lambda_1-\Lambda_k)\tau}, \quad \text{where }\tau:=\min_{n\in\mathbb{N}}t_n>0.
\end{equation}

As a result, the estimate \eqref{bounds on gamma_k} combined with Lemma~\ref{lem some convergence estimate} readily implies that the series in \eqref{take out the first term} is dominated by a summable series uniformly in $n$. Thus, by the dominated convergence theorem (with respect to the counting measure on $\mathbb{N}$), we can take the limit of \eqref{take out the first term} as $n\to\infty$ to obtain 
\begin{equation}\label{gamma_1=0}
\gamma_1=0.    
\end{equation}

The same process can be repeated inductively to \eqref{conormal derivative against psi with zero average}, leading to 
\begin{equation}\label{gamma_k=0}
 \gamma_k:=   \pairing{\partial_\nu \Phi_k}{\psi}=0 \quad \text{for all }k\in \mathbb{N} \text{ and }\psi\in H_*^{1/2}(\partial\Omega).
\end{equation}
Consider now the following auxiliary function:
\begin{equation}\label{serrin solution diy}
    \Phi:=\sum_{k=1}^\infty \lambda_k^{-1}\alpha_k \phi_k=
    \sum_{k=1}^\infty \Lambda_k^{-1}\Phi_k.
\end{equation}
By construction, \eqref{serrin solution diy} converges in the $D(\Delta_D)$-norm. As a result, we have 
\begin{equation*}
\begin{aligned}
-\Delta \Phi &= \sum_{k=1}^\infty \Phi_k =1,\\  \pairing{\partial_\nu\Phi}{\psi}&=\sum_{k=1}^\infty \Lambda_k^{-1} \pairing{\partial_\nu \Phi_k}{\psi} = 0 \quad \text{for }\psi\in H_*^{1/2}(\partial\Omega).
\end{aligned}
\end{equation*}
Thus, by Lemma~\ref{lem weak conormal is weak neumann}, $\Phi$ is a weak solution to the following overdetermined problem: 
\begin{equation*}
    -\Delta\Phi=1\quad \text{in }\Omega,\quad \Phi=0\quad \text{on }\partial\Omega, \quad \partial_\nu \Phi=b\quad \text{on }\partial\Omega
\end{equation*}
for some constant $b\in \mathbb{R}$. By \cite{figalli&zhang2025_serrin_rough_domains}, $\Omega$ must be a ball, as claimed.

\noindent\textbf{Case $t^\star\in(0,\infty)$}\\
Without loss of generality, assume that $(t_n)_n$ converges to some $t^\star\in (0,\infty)$. Combining \eqref{bounds on gamma_k} and Corollary~\ref{cor holomorphic function} yields that the function
\begin{equation*}
   f(z):=\sum_{k=1}^\infty \gamma_k e^{-\Lambda_k z} 
\end{equation*}
is holomorphic in $\setbld{z\in \mathbb{C}}{\re z>0}$. On the other hand, by \eqref{conormal derivative against psi with zero average}, $(t_n)_n$ is a sequence of zeros for $f$ with $t^\star$ as an accumulation point. Thus, by the identity theorem, $f\equiv 0$ in the whole $\setbld{z\in\mathbb{C}}{\re z>0}$. In particular, $f(t)=0$ for all $t>0$. Since $\psi\in H_*^{1/2}(\partial\Omega)$ was arbitrary, this means that the function $u$ satisfies overdetermined condition \eqref{boundary odc} at all times $t>0$, thus in particular, along a sequence $(t_n)_n$ satisfying $\lim_{n\to \infty }t_n=\infty$. The problem is thus reduced to the case $t^\star=\infty$, completing the proof.
\end{proof}

\begin{remark}
Under stronger regularity assumptions, say, $\partial\Omega\in C^{2,\alpha}$ ($0<\alpha<1$), then a shorter proof is available. Indeed, directly from \eqref{gamma_1=0}, one concludes that $\Phi_1$ satisfies
\begin{equation*}
   -\Delta\Phi_1=\Lambda_1\Phi_1\quad \text{in }\Omega, \quad \Phi_1=0\quad \text{on } \partial\Omega, \quad \partial_\nu \Phi_1=b\quad \text{on }\partial\Omega 
\end{equation*}
for some constant $b\in\mathbb{R}$. Moreover, since $\Phi_1>0$ in $\Omega$ by Krein--Rutman's theorem, the desired conclusion readily follows from \cite[Theorem 2]{serrin1971}.

We remark that, unfortunately, the shortcut above is not available in our setting, as, to the best of our knowledge, no analogue of \cite[Theorem 2]{serrin1971} is (yet) known to hold for Lipschitz domains. 
\end{remark}
\newpage
\section{Proof of Theorem~\ref{mainthm II}}\label{section pf mainthm II}
In this section, we give a proof of Theorem~\ref{mainthm II}.
\begin{proof}[Proof of Theorem~\ref{mainthm II}]
By \cite[Theorem~1.2 and Theorem~1.3 (1): (a),(b)]{vandenberg&Gilkey&Seeley2008heat_content_asymptotics} or \cite[Theorem 2.3.3: 1., 2., 3.]{gilkey2003asymptotic}, for every $\psi\in C^\infty(\Omega)$ with $\Delta\psi=0$ we have
\begin{equation}\label{eq:star}
f(t):=\int_{\Omega} u(t^2)\,\psi
=\int_{\Omega}\psi
-\frac{2t}{\sqrt{\pi}}\int_{\partial\Omega}\psi
+\frac{t^2}{2}\int_{\partial\Omega} H\,\psi
+o(t^2)
\qquad \text{as }t\to 0^+,
\end{equation}
where $H$ is the mean curvature of $\partial\Omega$, that is, the sum of the principal curvatures of $\partial\Omega$ defined with respect to the inward unit normal. In particular, if $\partial\Omega$ is a sphere of radius $R$, then $H=\frac{N-1}{R}$ (see \cite[page 6] {gilkey2003asymptotic} or \cite{vandenberg&Gilkey&Seeley2008heat_content_asymptotics}).  

On the other hand, for $t>0$,
\begin{align*}
f'(t)
&=2t\int_{\Omega} u_t(t^2)\,\psi
=2t\int_{\Omega} \Delta u(t^2)\,\psi \\
&=2t\int_{\partial\Omega} \partial_{\nu} u(t^2)\,\psi,
\end{align*}
where the last equality follows by integration by parts using $\Delta \psi=0$.

That is, if, in addition
\begin{equation*}
    \int_{\partial\Omega}\psi=0
\quad\text{and}\quad
t=\sqrt{t_n},
\end{equation*}
we get
\begin{equation*}
f'(\sqrt{t_n})
=2\sqrt{t_n}\,\Pairing{ \partial_{\nu} u(t_n)}{\psi}
=0
\qquad \forall n\in\mathbb{N},
\end{equation*}

and hence
\begin{equation*}
f''(0)=0.
\end{equation*}

On the other hand, differentiating \eqref{eq:star} gives
\begin{equation*}
f''(0)=\int_{\partial\Omega} H\,\psi.  
\end{equation*}

Since $\psi$ is arbitrary (subject to $\Delta\psi=0$ and $\int_{\partial\Omega}\psi=0$), we conclude that $H$ is constant on $\partial\Omega$, whence $\Omega$ is a ball by Aleksandrov's soap bubble theorem (see \cite{aleksandrov1958uniqueness}).
\end{proof}
\section{Proof of Theorem~\ref{mainthm III}}\label{section pf mainthm III}

In this section, we are going to show Theorem~\ref{mainthm III}. The proof will follow that of Theorem~\ref{mainthm I}, with some adequate modifications. 

\begin{proof}[Proof of Theorem~\ref{mainthm III}]
Let $u$ be the solution to \eqref{heat} and assume that $u$ satisfies \eqref{interior odc} with respect to sequences $(t_n)_n$ and $(\tau_n)_n$ with accumulation points $t^\star,\tau^\star\in (0,\infty]$ respectively. By the unique solvability of \eqref{heat}, Corollary~\ref{cor termwise whence u solves} implies that
\begin{equation}\label{expression for the solution u}
    u(t):=\sum_{k=1}^\infty e^{-\Lambda_k t} \Phi_k, 
\end{equation}
where $\Phi_k$ are defined according to \eqref{eigenfunction expansion II}.

Take an arbitrary function $\psi\in H_*^{1/2}(\partial\omega)$. Then, \eqref{interior odc} yields:
\begin{equation*}
\begin{aligned}
0=a_n \int_{\partial\omega} \psi = \left(\restr{u(\tau_n)}{\partial\omega},\psi\right)_{L^2(\partial\om)}=\sum_{k=1}^\infty \left(\restr{\Phi_k}{\partial\omega},\psi \right)_{L^2(\partial\omega)} e^{-\Lambda_k \tau_n},\\
0=b_n \int_{\partial\omega} \psi = 
 \Pairing{\partial_\nu \restr{u(t_n)}{\partial\omega}}{\psi}=
\sum_{k=1}^\infty \Pairing{\partial_\nu \restr{\Phi_k}{\partial\omega}}{\psi} e^{-\Lambda_k t_n}.
\end{aligned}
\end{equation*}
Thus, along the same line as proof of Theorem~\ref{mainthm I}, in each case (recall that, by assumption, we have either $t^\star\in (0,\infty)$ or $t^\star=\infty$, and either $\tau^\star\in (0,\infty)$ or $\tau^\star=\infty$), one can show inductively that for all $k\in \mathbb{N}$ and $\psi\in H_*^{1/2}(\partial\om)$ the following holds:
\begin{equation}\label{overdetereminations on partial omeghino}
    \left(\restr{\Phi_k}{\partial\omega},\psi\right)_{L^2(\partial\omega)}=0\quad \text{and} \quad \Pairing{\partial_\nu \restr{\Phi_k}{\partial\omega}}{\psi}=0.
\end{equation}
Let now $\Phi\in H_0^1(\Omega)$ denote the following auxiliary function:
\begin{equation}\label{serrin solution diy2}
    \Phi:=\sum_{k=1}^\infty \lambda_k^{-1}\alpha_k \phi_k=
    \sum_{k=1}^\infty \Lambda_k^{-1}\Phi_k.
\end{equation}
By construction, \eqref{serrin solution diy2} converges in the $D(\Delta_D)$-norm. As a result, term-wise application of the Laplacian shows that $\Phi$ satisfies the following boundary value problem:
\begin{equation}\label{non overdeteremined bvp}
    -\Delta\Phi = 1 \quad \text{in }\Omega, \quad \Phi=0\quad \text{on }\partial\Omega.
\end{equation}
Furthermore, by \eqref{overdetereminations on partial omeghino}, the following hold for all $\psi\in H_*^{1/2}(\partial\omega)$: 
\begin{eqnarray}\label{first one}
 \left(\restr{\Phi}{\partial\omega}, \psi\right)_{L^2(\omega)}= \sum_{k=1}^\infty \Lambda_k^{-1} \left(\restr{\Phi_k}{\partial\omega}, \psi \right)_{L^2(\omega)}=0,\\
 \label{second one}
\Pairing{\partial_\nu \restr{\Phi}{\partial\om}}{\psi} = \sum_{k=1}^\infty \Lambda_k^{-1} \Pairing{\partial_\nu \restr{\Phi_k}{\partial\omega}}{\psi}=0.   
\end{eqnarray}  
In particular, \eqref{first one} implies that $\restr{\Phi}{\partial\omega}\equiv a$ for some $a\in\mathbb{R}$. 
In turn, by combining \eqref{second one} with Lemma~\ref{lem weak conormal is weak neumann}, we obtain that the function $U\in H_0^1(\omega)$ given by $U:=\restr{\Phi}{\om}-a$ is a weak solution to the following overdetermined problem: 
\begin{equation*}
    -\Delta U=1\quad \text{in }\omega,\quad U=0\quad \text{on }\partial\omega, \quad \partial_\nu U=b\quad \text{on }\partial\omega
\end{equation*}
for some constant $b\in \mathbb{R}$. By \cite{figalli&zhang2025_serrin_rough_domains}, $\omega$ must be a ball and $U$ is radial in $\omega$.

In particular, also $\Phi$ is also radial inside $\omega$, being $\Phi=U+a$.
Moreover, notice that $\Phi$ is real analytic inside the whole $\Omega$, being a solution to \eqref{non overdeteremined bvp}. This implies that $\Phi$ is radial in $\Omega$. 
Finally, recall that $\Phi\in C^0(\overline{\Omega})$, since $\partial\Omega$ is made entirely of regular points for the Dirichlet Laplacian. Now, since $\Phi=0$ on $\partial\Omega$ because of the boundary condition, and $\Phi>0$ in $\Omega$ by the maximum principle, we conclude that $\partial\Omega$ is a sphere concentric with $\omega$. In other words, $\omega$ and $\Omega$ are concentric balls, as claimed.  
\end{proof}

\section{Possible extensions to complete Riemannian manifolds}\label{section manifolds}

In this section, we discuss which parts of the previous sections extend to the
Riemannian setting, and which conclusions have to be modified. Throughout,
$(M, g)$ is a complete Riemannian manifold of dimension $N\ge 2$, $\Omega\subset \overline{\Omega}\subset M$ is a connected
domain with $C^\infty$ boundary, and $u=u(x,t)$ denotes the solution of the
Dirichlet heat problem 
\begin{equation}\label{eq:heat-riem}
\begin{cases}
(\partial_t+\Delta_g)u=0 & \text{in } (0,\infty)\times \Omega,\\
u=1 & \text{on } \Omega\times\{0\},\\
u=0 & \text{on } (0,\infty)\times \partial\Omega.
\end{cases}
\end{equation}
where $\Delta_g$ denotes the Laplace--Beltrami operator on $(M, g)$.
We write $\nu$ for the \emph{inward} unit normal along $\partial\Omega$ (the
choice of the normal only affects the sign of $\partial_\nu u$, and therefore
is immaterial for all the rigidity statements below).

By standard spectral theory for the Dirichlet Laplace--Beltrami operator on
compact manifolds with boundary, there exists an orthonormal basis
$(\phi_j)_{j\geq 1}\subset C^\infty(\overline{\Omega})$ of $L^2(\Omega)$ and
a nondecreasing sequence of positive eigenvalues $\lambda_j\to\infty$ such
that
\[
\Delta_g\phi_j=\lambda_j\phi_j\quad\text{in }\Omega,\qquad \phi_j=0
\quad\text{on }\partial\Omega.
\]
If
\[
a_j:=\int_\Omega \phi_j,
\]
then
\begin{equation}\label{eq:spectral-u}
u(x,t)=\sum_{j=1}^\infty a_j e^{-\lambda_j t}\phi_j(x),
\end{equation}
and, for every $\tau>0$,
\begin{equation}\label{eq:spectral-du}
\partial_\nu u(t)=\sum_{j=1}^\infty a_j e^{-\lambda_j t}\partial_\nu\phi_j
\qquad\text{in }C^\infty(\partial\Omega)\text{ for }t\geq \tau,
\end{equation}
see \cite[Chapters VI--VII]{MR768584}.

Let
\[
\mathcal C(\partial\Omega):=\{ \text{constant functions on }\partial\Omega\}.
\]
The boundary function $\partial_\nu u(t)$ belongs to
$\mathcal C(\partial\Omega)$ if and only if
\[
\int_{\partial\Omega}\psi\,\partial_\nu u(t)=0
\qquad\text{for every }\psi\in C^\infty(\partial\Omega)
\text{ such that }\int_{\partial\Omega}\psi=0.
\]

\begin{proposition}\label{prop:riem-cfp}
Assume that there exists a sequence $(t_m)_m\subset (0,\infty)$ such that
\[
\partial_\nu u(t_m)\in \mathcal C(\partial\Omega)\qquad\text{for all }m\in\mathbb{N},
\]
and either
\[
t_m\to t_\ast\in (0,\infty),
\qquad\text{or}\qquad
t_m\to \infty.
\]
Then $\Omega$ has the \emph{constant flow property}, namely
\[
\partial_\nu u(t)\in \mathcal C(\partial\Omega)\qquad\text{for every fixed }t>0.
\]
\end{proposition}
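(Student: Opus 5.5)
The strategy is to transplant the proof of Theorem~\ref{mainthm I} to the manifold setting. Fix an arbitrary $\psi\in C^\infty(\partial\Omega)$ with $\int_{\partial\Omega}\psi=0$. We group the eigenvalues without multiplicity as $\Lambda_1<\Lambda_2<\dots$ and set $\Phi_k:=\sum_{\lambda_j=\Lambda_k}a_j\phi_j$, which is the $L^2$-projection of $1$ onto the $k$-th eigenspace. Define
\[
F(z):=\sum_{k=1}^\infty \gamma_k e^{-\Lambda_k z},\qquad \gamma_k:=\int_{\partial\Omega}\psi\,\partial_\nu\Phi_k .
\]
By \eqref{eq:spectral-du}, integrating against $\psi$ commutes with the sum for real $t>0$. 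Hence $F(t)=\int_{\partial\Omega}\psi\,\partial_\nu u(t)$ for $t>0$, and the hypothesis reads $F(t_m)=0$ for all $m$. The goal is to prove $F(t)=0$ for every $t>0$. Since $\psi$ is an arbitrary zero-mean test function, this is exactly the constant flow property.

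The key analytic input is a polynomial bound $|\gamma_k|\le D\Lambda_k^{r}$ for some fixed $r$, with $D$ depending on $\psi$. Two routes are available.

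\emph{Green's formula.} Let $\widetilde\psi$ be the smooth harmonic extension of $\psi$. Then
\[
\gamma_k=\pm\int_\Omega \Delta_g\Phi_k\,\widetilde\psi=\pm\Lambda_k\int_\Omega\Phi_k\widetilde\psi .
\]
Since $\|\Phi_k\|_{L^2}\le|\Omega|^{1/2}$, this gives $|\gamma_k|\le \Lambda_k|\Omega|^{1/2}\|\widetilde\psi\|_{L^2}$, the same bound as \eqref{bounds on gamma_k}. This is clean and avoids trace estimates altogether.

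\emph{Elliptic estimates.} Alternatively, one can bound $\|\partial_\nu\phi_j\|_{L^2(\partial\Omega)}$ by Sobolev norms of $\phi_j$, and these are polynomial in $\lambda_j$.

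Next we need a Weyl-type lower bound $\lambda_k\ge c\,k^{2/N}$, for instance from Weyl's law on the compact manifold with boundary $\overline\Omega$. With it, the argument of Lemma~\ref{lem some convergence estimate} shows that $\sum_k\Lambda_k^r e^{-\Lambda_k\tau}<\infty$ for every $\tau>0$. As in Corollary~\ref{cor holomorphic function}, $F$ is then holomorphic on $\{\re z>0\}$, being a locally uniform limit of holomorphic functions.

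The two cases then go as follows.

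\emph{Case $t_m\to t_\ast\in(0,\infty)$.} The zeros $t_m$ accumulate at an interior point of the half-plane. The identity theorem gives $F\equiv0$, so in particular $F(t)=0$ for all $t>0$.

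\emph{Case $t_m\to\infty$.} Multiply by $e^{\Lambda_1 t_m}$ and pass to the limit using dominated convergence, with the uniform domination \eqref{let's dominate} where $\tau=\min_m t_m>0$. This yields $\gamma_1=0$. Inductively, multiplying by $e^{\Lambda_k t_m}$ after removing the vanished terms, we get $\gamma_k=0$ for all $k$, and therefore $F\equiv0$ on $(0,\infty)$.

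The only step needing real care is justifying the growth bound on $\gamma_k$ and the eigenvalue counting lower bound on a general complete manifold. Both concern only the compact manifold with boundary $\overline\Omega$, so the Green's formula bound together with Weyl asymptotics, or a Li–Yau/Cheng-type estimate, should suffice. No completeness or curvature hypothesis on $M$ enters beyond this.
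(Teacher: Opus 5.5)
Your proposal is correct and follows essentially the same route as the paper. The paper writes $F_\psi(t)=\int_{\partial\Omega}\psi\,\partial_\nu u(t)$ as an exponential series and applies the identity theorem when $t_m\to t_\ast\in(0,\infty)$. When $t_m\to\infty$, it removes the leading term $e^{-\Lambda_1 t}$ and iterates, as you do. Your Green's formula bound $|\gamma_k|\le \Lambda_k|\Omega|^{1/2}\|\widetilde\psi\|_{L^2(\Omega)}$, combined with Weyl's law, makes explicit the absolute convergence, and hence the analyticity, that the paper simply reads off from \eqref{eq:spectral-du}.
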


\begin{proof}
Fix $\psi\in C^\infty(\partial\Omega)$ with zero average on $\partial\Omega$ and
set
\[
F_\psi(t):=\int_{\partial\Omega}\psi\,\partial_\nu u(t).
\]
By \eqref{eq:spectral-du},
\begin{equation}\label{eq:Fpsi-series}
F_\psi(t)=\sum_{j=1}^\infty b_j e^{-\lambda_j t},
\qquad
b_j:=a_j\int_{\partial\Omega}\psi\,\partial_\nu\phi_j,
\end{equation}
with absolute and uniform convergence on $[\tau,\infty)$ for every $\tau>0$.
Hence $F_\psi$ is real-analytic on $(0,\infty)$.

If $t_m\to t_\ast\in(0,\infty)$, then $F_\psi(t_m)=0$ for every $m\in\mathbb{N}$ by the
assumption $\partial_\nu u(t_m)\in\mathcal C(\partial\Omega)$. Since the
zeros accumulate at an interior point of $(0,\infty)$ and $F_\psi$ is
real-analytic, we conclude that $F_\psi\equiv 0$ on $(0,\infty)$.

Assume now that $t_m\to\infty$. Let
\[
0<\Lambda_1<\Lambda_2<\Lambda_3<\cdots
\]
be the distinct Dirichlet eigenvalues of $\Delta_g$ on $\Omega$, and rewrite
\eqref{eq:Fpsi-series} as
\[
F_\psi(t)=\sum_{\ell=1}^\infty \beta_\ell e^{-\Lambda_\ell t},
\qquad
\beta_\ell:=\sum_{\lambda_j=\Lambda_\ell} b_j.
\]
We claim that $\beta_\ell=0$ for every $\ell$. Since the series converges
absolutely on $[T,\infty)$ for every $T>0$, we can choose $T>0$ so that
$t_m\geq 2T$ for all $m$ large enough. Then
\[
0=e^{\Lambda_1 t_m}F_\psi(t_m)
=\beta_1+\sum_{\ell\geq 2}\beta_\ell e^{-(\Lambda_\ell-\Lambda_1)t_m}.
\]
Moreover,
\begin{align*}
\sum_{\ell\geq 2} |\beta_\ell| e^{-(\Lambda_\ell-\Lambda_1)t_m}
&=
\sum_{\ell\geq 2} |\beta_\ell| e^{-\Lambda_\ell T}
e^{-\Lambda_\ell(t_m-T)}e^{\Lambda_1 t_m}\\
&\leq
e^{-(\Lambda_2-\Lambda_1)t_m+\Lambda_2 T}
\sum_{\ell\geq 2} |\beta_\ell|e^{-\Lambda_\ell T}\xrightarrow[m\to\infty]{}0.
\end{align*}
Hence $\beta_1=0$. Subtracting the first term and iterating the same argument,
we obtain $\beta_\ell=0$ for every $\ell$, hence $F_\psi\equiv 0$ also in this
case.

Since $F_\psi(t)=0$ for every zero-average $\psi$ and every $t>0$, it follows
that $\partial_\nu u(t)$ is orthogonal to all zero-average smooth
functions on $\partial\Omega$, and therefore
$\partial_\nu u(t)\in\mathcal C(\partial\Omega)$ for every $t>0$.
\end{proof}

\begin{corollary}\label{cor:riem-analytic}
Assume in addition that $(M,g)$ is real-analytic. Under the assumptions of
Proposition~\ref{prop:riem-cfp}, the domain $\Omega$ is an isoparametric tube.
In particular, if $M=\mathbb R^N$ or $M=\mathbb H^N$, then $\Omega$ is a
geodesic ball.
\end{corollary}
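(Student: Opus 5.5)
We will start from Proposition~\ref{prop:riem-cfp}, which already shows that $\partial_\nu u(t)$ is constant on $\partial\Omega$ for every $t>0$. From there we reduce to an elliptic overdetermined problem by integrating in time, exactly as in the introduction. Set
\[
U(x):=\int_0^\infty u(x,t)\,dt=\sum_{j}\lambda_j^{-1}a_j\phi_j(x).
\]
The series converges in $C^\infty(\overline\Omega)$ away from the initial layer, and the time integral converges because $u$ decays like $e^{-\lambda_1 t}$. Then $U$ is the torsion function: $\Delta_g U=1$ in $\Omega$ (with the paper's sign convention) and $U=0$ on $\partial\Omega$. Integrating $\partial_\nu u(t)\equiv b(t)$ gives $\partial_\nu U\equiv\int_0^\infty b(t)\,dt$, which is a constant. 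The interchange of integral and normal derivative is justified by \eqref{eq:spectral-du} on $[\tau,\infty)$, together with the integrability of $\partial_\nu u$ near $t=0$. The latter holds since $\partial_\nu u(t)=O(t^{-1/2})$ by standard short-time heat kernel bounds on a smooth compact manifold with boundary, or alternatively since $\int_0^\infty \partial_\nu u\,dt=\partial_\nu U$ weakly. So $\Omega$ carries a Serrin-type torsion function on a real-analytic manifold.

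To obtain the isoparametric structure I would follow the approach of Savo \cite{savo2016heat}, and this is where the main work lies. In the real-analytic setting the constant flow property for all $t$ is equivalent to the heat content $\int_\Omega u(t)$ restricted to sublevel sets behaving well. Concretely, one shows that the distance function $\rho$ to $\partial\Omega$ satisfies $u(x,t)=F(\rho(x),t)$ near $\partial\Omega$. I would try this first via the short-time expansion. Write $u(x,t)\sim\sum_k t^{k/2}\,\mathcal{A}_k(x)$ in boundary normal coordinates. The constancy of $\partial_\nu u(t)$ for all small $t$ forces each boundary coefficient, which is a universal polynomial in the second fundamental form and its normal derivatives, to be constant on $\partial\Omega$. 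In particular the mean curvature of $\partial\Omega$ is constant, as in the proof of Theorem~\ref{mainthm II}. More generally, the mean curvatures of all parallel hypersurfaces near $\partial\Omega$ are constant, since each parallel hypersurface $\{\rho=s\}$ is itself a level set of $u(t)$ for suitable $t$. This is exactly the statement that $\rho$ is an isoparametric function near $\partial\Omega$.

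Real-analyticity of $g$, and hence of $\rho$ and of $u$ in the interior, then allows these level sets to be continued to all of $\Omega$. So $\Omega$ is a tube around a focal submanifold (the set where $\rho$ is maximal), which is the meaning of isoparametric tube. For $M=\mathbb{R}^N$ or $\mathbb{H}^N$, we invoke the classification of isoparametric hypersurfaces due to Cartan and Segre. Compact ones bounding a domain are geodesic spheres, so $\Omega$ is a geodesic ball. Alternatively, since $\partial\Omega$ has constant mean curvature, Aleksandrov's theorem in $\mathbb{R}^N$ and $\mathbb{H}^N$ \cite{aleksandrov1958uniqueness} finishes the argument directly.

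The main obstacle is the middle step: passing from constancy of the flux on $\partial\Omega$ alone to the statement that the nearby level sets of $u(t)$ are parallel hypersurfaces. This is a Cauchy–Kovalevskaya-type uniqueness argument. The function $u$ and the radial profile $F(\rho,t)$ share Cauchy data on $\partial\Omega$ (both vanish there, and both have normal derivative depending only on $t$), and both solve the heat equation. The profile $F(\rho,t)$ solves it only if $\Delta\rho$ is a function of $\rho$, so the argument must be a bootstrap on Taylor coefficients in $\rho$. I would rely on Savo's theorem for this step rather than redo it.
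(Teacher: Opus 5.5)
Your proof has the same structure as the paper's. Proposition~\ref{prop:riem-cfp} gives the constant flow property, Savo's classification turns it into an isoparametric tube, and the classification of compact isoparametric hypersurfaces in $\mathbb{R}^N$ and $\mathbb{H}^N$ yields a geodesic ball.

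The torsion-function paragraph is never used and can be dropped.

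Your own sketch of the key step does not stand by itself. The claim that each parallel hypersurface $\{\rho=s\}$ is a level set of $u(t)$ presupposes exactly what must be proved. The passage from ``isoparametric near $\partial\Omega$'' to a global tube around a smooth minimal focal submanifold ``by analytic continuation'' is asserted, not argued: you would have to control where $\rho$ stops being smooth, i.e.\ the cut locus. Both points are covered only by citing Savo's theorem in its global form: for analytic metrics, the constant flow property holds if and only if $\Omega$ is an isoparametric tube \cite{MR3859464}. This is precisely what the paper cites, rather than the earlier paper \cite{savo2016heat} you point to.

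Your alternative ending for $\mathbb{R}^N$ and $\mathbb{H}^N$ is valid. Constant mean curvature follows from the short-time expansion, as in the proof of Theorem~\ref{mainthm II}, and Aleksandrov's theorem then gives the geodesic ball. That route handles the ``in particular'' part without Savo's theorem.
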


\begin{proof}
By Proposition~\ref{prop:riem-cfp}, $\Omega$ has the constant flow property.
Savo proved that, on a compact real-analytic Riemannian manifold with smooth
boundary, the constant flow property is equivalent to being an isoparametric
tube \cite{MR3859464}. This gives the first claim.

If $M=\mathbb R^N$ or $M=\mathbb H^N$, then compact isoparametric hypersurfaces
are geodesic spheres; equivalently, bounded isoparametric tubes are geodesic
balls. This follows from the classification of isoparametric hypersurfaces in
real space forms; see, for instance, the survey \cite[Section~1]{MR2434936}.
\end{proof}

The conclusion of Corollary~\ref{cor:riem-analytic} is sharp: on general
real-analytic manifolds, one cannot replace ``isoparametric tube'' with
``geodesic ball,'' not even on the round sphere.

\begin{proposition}\label{prop:sphere-tubes}
Let $N\ge 3$, $2\le k\le N-1$, and $0<r<1$. Then, the set
\begin{equation*}
    \Omega:=\setbld{(x,y)\in \mathbb{R}^k\times \mathbb{R}^{N-k}}{|x|^2+|y|^2=1,\quad |y|<r}\subset\mathbb{S}^{N-1}
\end{equation*}
is a domain of $\mathbb{S}^{N-1}$ with the constant flow property that is not a geodesic ball.
\end{proposition}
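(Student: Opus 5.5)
The plan is to exploit the symmetry of $\Omega$ under the group $G:=O(k)\times O(N-k)$ acting on $\mathbb{R}^k\times\mathbb{R}^{N-k}$. This group preserves $\mathbb{S}^{N-1}$ and the round metric, and it preserves $\Omega$ because the defining condition $|y|<r$ is invariant.

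First I would check that $\Omega$ is a domain with smooth boundary and is not a geodesic ball. The boundary is $\partial\Omega=\{|y|=r,\ |x|=\sqrt{1-r^2}\}\cong \mathbb{S}^{k-1}\times\mathbb{S}^{N-k-1}$, which is smooth. Connectedness of $\Omega$ follows because $\Omega$ deformation retracts, via $y\to 0$ with renormalization, onto $\{y=0\}\cap\mathbb{S}^{N-1}=\mathbb{S}^{k-1}$, and this sphere is connected since $k\ge2$. Any geodesic ball in $\mathbb{S}^{N-1}$ has boundary either a single point or a geodesic sphere $\cong\mathbb{S}^{N-2}$, which is connected and simply connected when $N\ge 3$. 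If $N-k\ge 2$, then $\partial\Omega$ contains a factor $\mathbb{S}^{k-1}\times\mathbb{S}^{N-k-1}$ with both factors of positive dimension, so it is not homeomorphic to $\mathbb{S}^{N-2}$. The product is also not simply connected when $k=2$ or $N-k=2$, and in the general case one can compare Betti numbers. If $N-k=1$, then $\partial\Omega$ consists of two disjoint spheres, so it is disconnected and again cannot be the boundary of a geodesic ball. Alternatively, one can note directly that $\Omega$ itself is not contractible, since it retracts onto $\mathbb{S}^{k-1}$ with $k-1\ge1$.

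Next I would prove the constant flow property via uniqueness. The solution $u$ of \eqref{eq:heat-riem} is unique, and for every $g\in G$ the function $u\circ g$ solves the same problem, because $g$ is an isometry preserving $\Omega$, its boundary, and the initial and boundary data. Hence $u\circ g=u$, so $u(t)$ is $G$-invariant for each $t>0$. The isometries in $G$ also map inward normals to inward normals, which gives $\partial_\nu u(t)\circ g=\partial_\nu u(t)$ on $\partial\Omega$. Finally, $G$ acts transitively on $\partial\Omega$: given two points $(x,y)$ and $(x',y')$ with $|x|=|x'|$ and $|y|=|y'|$, some element of $O(k)\times O(N-k)$ maps one to the other. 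Therefore $\partial_\nu u(t)$ is constant on $\partial\Omega$ for every $t>0$.

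The only delicate step is the claim that $\Omega$ is not a geodesic ball. I would settle it with the clean topological argument: $\Omega$ has the homotopy type of $\mathbb{S}^{k-1}$, which is not contractible when $k\ge2$, whereas every geodesic ball in the sphere is contractible. This argument covers all cases at once.
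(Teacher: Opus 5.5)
Your proof is correct and follows the paper's argument for the constant flow property: $\Omega$ is $O(k)\times O(N-k)$-invariant, uniqueness of the solution forces $u(t)$ to be invariant, and the group acts transitively on $\partial\Omega\simeq\mathbb{S}^{k-1}\times\mathbb{S}^{N-k-1}$, so $\partial_\nu u(t)$ is constant. The one minor deviation is how you rule out a geodesic ball. The paper uses that $\partial\Omega$ is not a sphere, while you use that $\Omega\simeq\mathbb{S}^{k-1}$ is not contractible. Your version works: the whole sphere, the only non-contractible geodesic ball, is excluded because $\partial\Omega\neq\emptyset$. It also verifies connectedness, which the paper leaves implicit.
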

\begin{proof}
By construction, $\Omega$ is an isoparametric tube around a totally geodesic sphere $\mathbb{S}^{k-1}$ on $\mathbb{S}^{N-1}$. Its boundary is given by
\begin{equation*}
    \partial\Omega=\setbld{(x,y)\in \mathbb{R}^k\times \mathbb{R}^{N-k}}{|x|=\sqrt{1-r^2}, \quad |y|=r} \simeq \mathbb{S}^{k-1}\times \mathbb{S}^{N-k-1}.
\end{equation*}
As $\mathbb{S}^{k-1}\times \mathbb{S}^{N-k-1}$ is not diffeomorphic to a sphere, this readily implies that $\Omega$ cannot be a geodesic ball. 
In what follows, we will show that, despite that, $\Omega$ exhibits enough symmetry to ensure the constant flow property. To this end, consider the group $\Gamma:= O(k)\times O(N-k)$,
acting (in the natural way) on the set $\Omega\subset\mathbb{R}^N=\mathbb{R}^k\times\mathbb{R}^{N-k}$ as a subgroup of the orthogonal group in dimension $N$. First, notice that both $\Omega$ and $\partial\Omega$ are $\Gamma$-invariant, and that the differential operators $\Delta_g$ and $\partial_\nu$ are $\Gamma$-equivariant. This,  combined with the fact that the initial and boundary conditions in \eqref{eq:heat-riem} are $\Gamma$-invariant, yields that for any solution $u=u(x,t)$ of \eqref{eq:heat-riem} and $\gamma\in \Gamma$, then $u_\gamma(x,t):=u(\gamma(x), t)$ also solves \eqref{eq:heat-riem}. In particular, by unique solvability, it follows that any solution to \eqref{eq:heat-riem} must be $\Gamma$-invariant in space for all $t>0$. Finally, since $\Gamma$ acts transitively on $\partial\Omega$, then $\partial_\nu u(t)$ must be a constant function on $\partial\Omega$ for all $t>0$.  
\end{proof}

Proposition~\ref{prop:sphere-tubes} shows that the conclusion ``$\Omega$ is a
ball'' cannot hold in a na\"{i}ve Riemannian extension of the finite-time or
large-time rigidity result, even when the overdetermined condition holds for
\emph{every} $t>0$.

\begin{corollary}\label{cor:riem-serrin}
Assume that the hypotheses of Proposition~\ref{prop:riem-cfp} hold with
$t_m\to\infty$. Then the function
\[
v(x):=\int_0^\infty u(x,t)
\]
is well defined, belongs to $C^\infty(\overline\Omega)$, and solves the
overdetermined torsion problem
\[
\begin{cases}
\Delta_g v = 1 & \text{in }\Omega,\\
v=0 & \text{on }\partial\Omega,\\
\partial_\nu v = \text{const} & \text{on }\partial\Omega.
\end{cases}
\]
In particular, $\Omega$ is a Serrin domain.
\end{corollary}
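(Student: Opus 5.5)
The plan is to define $v$ by time integration of the spectral expansion and check the three conditions termwise. By Proposition~\ref{prop:riem-cfp}, $\Omega$ has the constant flow property, so $\partial_\nu u(t)\equiv c(t)$ on $\partial\Omega$ for every $t>0$. Using \eqref{eq:spectral-u}, I will formally integrate term by term:
\[
v=\sum_{j\ge1} a_j\lambda_j^{-1}\phi_j .
\]

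\emph{Step 1: convergence and smoothness.} Near $t=0$, $0\le u\le 1$ by the maximum principle, so $\int_0^1 u\,dt$ is bounded. For $t\ge1$, the series \eqref{eq:spectral-u} converges in $C^\infty(\overline\Omega)$ with exponential decay $e^{-\lambda_1 t}$, using Weyl's law and standard Sobolev bounds on $\phi_j$. Hence $v$ is well defined. Alternatively, I can identify $v$ directly with $\Delta_g^{-1}1$. The series $\sum_j a_j\lambda_j^{-1}\phi_j$ is the spectral expansion of the Dirichlet solution $w$ of $\Delta_g w=1$, $w=0$ on $\partial\Omega$, because $\int_\Omega w\phi_j=\lambda_j^{-1}\int_\Omega\phi_j=a_j/\lambda_j$ by self-adjointness. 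Elliptic regularity on the smooth compact $\overline\Omega$ then gives $v=w\in C^\infty(\overline\Omega)$. Fubini in $L^2$ justifies the equality $\int_0^\infty u\,dt=w$: each coefficient satisfies $\int_0^\infty a_je^{-\lambda_j t}dt=a_j/\lambda_j$, and the $L^2$ norms $\|u(t)\|_{L^2}$ are integrable in $t$.

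\emph{Step 2: the flux condition.} This step needs the most care, since the limit interchange must be justified at $t=0$. Fix $\psi\in C^\infty(\partial\Omega)$ with zero average. I want
\[
\int_{\partial\Omega}\psi\,\partial_\nu v=\int_0^\infty F_\psi(t)\,dt,
\]
which equals $0$ because $F_\psi\equiv0$ by Proposition~\ref{prop:riem-cfp}. To avoid boundary regularity of $u$ near $t=0$, I will use Green's formula instead. Let $h$ be the harmonic extension of $\psi$, so $\Delta_g h=0$. Then
\[
\int_{\partial\Omega}\psi\,\partial_\nu v=\pm\int_\Omega h\,\Delta_g v=\pm\int_\Omega h ,
\]
since $v=0$ on $\partial\Omega$. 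In the same way, $F_\psi(t)=\pm\int_\Omega h\,\Delta_g u(t)=\mp\frac{d}{dt}\int_\Omega h\,u(t)$.

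Integrating over $t\in(\varepsilon,T)$ gives
\[
\int_\varepsilon^T F_\psi\,dt=\mp\Big(\int_\Omega h\,u(T)-\int_\Omega h\,u(\varepsilon)\Big).
\]
As $\varepsilon\to0$, $u(\varepsilon)\to1$ in $L^2$, and as $T\to\infty$, $u(T)\to0$. Hence $\pm\int_\Omega h=\int_0^\infty F_\psi\,dt=0$, with signs consistent throughout, so $\int_{\partial\Omega}\psi\,\partial_\nu v=0$.

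\emph{Conclusion.} Since $\psi$ was an arbitrary zero-average function, $\partial_\nu v$ is constant on $\partial\Omega$. Therefore $v$ solves the overdetermined torsion problem, and $\Omega$ is a Serrin domain. The only delicate point is the $t\to0$ endpoint, and the Green's formula argument handles it without needing the boundary trace of $\partial_\nu u$ at small times.
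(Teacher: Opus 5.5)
Your proof is correct. It shares the paper's core idea: the time integral of $u$ is the torsion function, and the constant flow property passes to it. You justify the two analytic steps differently, however.

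The paper asserts that $\int_0^\infty u(t)\,dt$ converges in $C^\infty(\overline\Omega)$. It then interchanges $\partial_\nu$ with the time integral to get $\partial_\nu v=\int_0^\infty c(t)\,dt$, and integrates the heat equation in time to get $\Delta_g v=1$. This is shorter, but its justification only covers the large-time end. Near $t=0$ the integral does not converge in $C^2(\overline\Omega)$, and $\partial_\nu u(t)$ is of order $t^{-1/2}$, so the interchange tacitly relies on a uniform gradient bound that the paper does not state.

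You instead identify $v$ with the Dirichlet torsion function through its spectral coefficients $a_j/\lambda_j$, using elliptic regularity. You then verify the flux condition weakly: testing against the harmonic extension $h$ of a zero-average $\psi$ turns the constant flow property into conservation of $\int_\Omega h\,u(t)$. Only the $L^2$ limits of $u(t)$ as $t\to0^+$ and $t\to\infty$ are then needed. This is the same device the paper uses in the proof of Theorem~\ref{mainthm II}. It also yields the quadrature identity $\int_\Omega h=0$ directly, for every harmonic $h$ whose boundary values have zero average.

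The one detail you leave implicit is that Fubini gives $v=w$ only almost everywhere. Since $x\mapsto\int_0^\infty u(x,t)\,dt$ is continuous on $\overline\Omega$ by dominated convergence ($0\le u\le 1$ for $t\le 1$, uniform convergence for $t\ge 1$), the equality holds everywhere. Hence the pointwise-defined $v$ itself lies in $C^\infty(\overline\Omega)$, as the statement requires.
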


\begin{proof}
By Proposition~\ref{prop:riem-cfp}, $\Omega$ has the constant flow property, i.e.
\[
\partial_\nu u(t)=c(t)\qquad\text{on }\partial\Omega
\]
for some function $c:(0,\infty)\to\mathbb R$. By \eqref{eq:spectral-u}, the
solution $u$ decays exponentially in $C^\infty(\overline\Omega)$ as
$t\to\infty$, so the integral defining $v$ converges in
$C^\infty(\overline\Omega)$. Since $u=0$ on $\partial\Omega\times (0,\infty)$,
we have $v=0$ on $\partial\Omega$. Moreover
\[
\partial_\nu v
=
\int_0^\infty \partial_\nu u(t)
=
\int_0^\infty c(t),
\]
which is constant on $\partial\Omega$.

Finally, integrating the heat equation in time gives
\[
\Delta_g v
=
-\int_0^\infty \partial_t u(t)
=
u(0)-\lim_{t\to\infty}u(t)
=
1
\qquad\text{in }\Omega.
\]
\end{proof}

\section{Conjectures and open problems}
In this section, we state some open problems and conjectures related to the overdetermined problem presented in this paper.

\begin{problem}\label{problem unbounded}
For what unbounded domains $\Omega\subset\mathbb{R}^N$ ($N\ge 2$) does the solution to the following problem satisfy \eqref{boundary odc}?
\begin{equation*}
  \begin{cases}
      \partial_t u= \Delta u \quad \text{in } \Omega\times (0,\infty),\\
      u=1 \quad \text{on }\Omega\times\{0\},\\
      u=0\quad \text{on }\partial\Omega\times(0,\infty),\\
      u(x,t)\to 0 \quad \text{uniformly in $t$ as $|x|\to \infty$}.
  \end{cases}  
\end{equation*}
\end{problem}

\begin{conjecture}\label{conjecture exterior}
Complements of closed balls are the only solutions to Problem \ref{problem unbounded} among \emph{exterior domains}.      
\end{conjecture}

\begin{remark}
Conjecture \ref{conjecture exterior} is in line with the results of \cite{MR1860493}. There, the authors make use of the method of moving planes in a cylindrical region of $\mathbb{R}^N\times\mathbb{R}$, thus fully exploiting a continuous-time overdetermination. As a result, it is not clear how to adapt their methods to the case of a discrete-time overdetermination, as in \eqref{boundary odc}. We also remark that the eigenfunction expansion technique employed in the proof of Theorem \ref{mainthm I} cannot be applied in the unbounded domain case due to the lack of compactness.
\end{remark}

\begin{problem}
Generalize Theorem \ref{mainthm II} to milder regularity conditions.   
\end{problem}
\begin{remark}
Our current methods rely on the following two ingredients: precise heat content asymptotics and Aleksandrov's soap bubble theorem. While the latter is known to hold (in a geometric-measure-theoretical sense) for sets of finite perimeter, the heat content asymptotics provided in \cite{gilkey2003asymptotic} (as well as the analogous point-wise asymptotics given in \cite{savo2016heat}) 
used in the proof of Theorem \ref{mainthm II} rely on an infinite family of recurrence relations, which can only be derived when the boundary $\partial\Omega$ is smooth.
\end{remark}

\begin{problem}\label{problem finite times}
 For what bounded domains $\Omega\subset\mathbb{R}^N$ ($N\ge 2$) does the solution $u$ to \eqref{heat} admit \emph{a finite number} of times $t_1,\dots,t_m>0$ satisfying the following? 
 \begin{equation*}
     \partial_\nu u(t_n)\equiv b_n \quad \text{on }\partial\Omega\quad \text{for }n=1,\dots, m.
 \end{equation*}
\end{problem}

\begin{conjecture}
For any $m\in\mathbb{N}$, Problem \ref{problem finite times} admits nontrivial solutions (solutions that are not Euclidean balls). 
\end{conjecture}

\section*{Acknowledgements}
Lorenzo Cavallina was partially supported by JSPS KAKENHI Grant Numbers JP22K13935, JP26K17010, and JP21KK0044.\\
Andrea Pinamonti is a member of the Istituto Nazionale di Alta Matematica (INdAM), Gruppo Nazionale per l'Analisi Matematica, la Probabilit\`a e le loro Applicazioni (GNAMPA), and is supported by the University of Trento, the MIUR-PRIN 2022 Project \emph{Regularity problems in sub-Riemannian structures}  Project code: 2022F4F2LH and the INdAM-GNAMPA 2025 Project \emph{Structure of sub-Riemannian hypersurfaces in Heisenberg groups}, CUP ES324001950001.

\bibliographystyle{siam}
\bibliography{references}

@article {MR1011556,
    AUTHOR = {Alessandrini, Giovanni and Garofalo, Nicola},
     TITLE = {Symmetry for degenerate parabolic equations},
   JOURNAL = {Arch. Rational Mech. Anal.},
  FJOURNAL = {Archive for Rational Mechanics and Analysis},
    VOLUME = {108},
      YEAR = {1989},
    NUMBER = {2},
     PAGES = {161--174},
      ISSN = {0003-9527},
   MRCLASS = {35K55 (35K65)},
  MRNUMBER = {1011556},
MRREVIEWER = {Zhuo\ Qun\ Wu},
       DOI = {10.1007/BF01053461},
       URL = {https://doi.org/10.1007/BF01053461},
}

@Misc{siam,
  key = {zzz},
  title =	 {{SIAM} Style Manual: For journals and books},
  year =	 2013,
url = {https://www.siam.org/journals/pdf/stylemanual.pdf}}

@article{li_yau1983schrodinger&eigenvalue_problem,
  title={On the {S}chr{\"o}dinger equation and the eigenvalue problem},
  author={Li, Peter and Yau, Shing-Tung},
  journal={Communications in Mathematical Physics},
  volume={88},
  number={3},
  pages={309--318},
  year={1983},
  publisher={Springer}
}

@article{figalli&zhang2025_serrin_rough_domains,
    author ={Figalli, Alessio and  Zhang, Yi Ru-Ya},    
    title = {{S}errin’s overdetermined problem in rough domains} ,
    journal = {Journal of the European Mathematical Society (published online first)},
    year = {2025},
    doi={DOI 10.4171/JEMS/1726}
}

@article {MR1776351,
    AUTHOR = {Damascelli, Lucio and Pacella, Filomena},
     TITLE = {Monotonicity and symmetry results for {$p$}-{L}aplace
              equations and applications},
   JOURNAL = {Adv. Differential Equations},
  FJOURNAL = {Advances in Differential Equations},
    VOLUME = {5},
      YEAR = {2000},
    NUMBER = {7-9},
     PAGES = {1179--1200},
      ISSN = {1079-9389},
   MRCLASS = {35J65 (35B05 35B50 35J70)},
  MRNUMBER = {1776351},
MRREVIEWER = {Srinivasan\ Kesavan},
}

@article{kumaresan_prajapat1998serrin,
  title={Serrin’s result for hyperbolic space and sphere},
  author={Kumaresan, Somas and Prajapat, Jyotshana},
  journal={Duke Mathematical Journal},
  volume={91},
  number={1},
  pages={17--28},
  year={1998}
}

@book{gilkey2003asymptotic,
  title={Asymptotic formulae in spectral geometry},
  author={Gilkey, Peter B},
  year={2003},
  publisher={Chapman and Hall/CRC}
}

@article{vandenberg&Gilkey&Seeley2008heat_content_asymptotics,
  title={Heat content asymptotics with singular initial temperature distributions},
  author={Van den Berg, M and Gilkey, P and Seeley, R},
  journal={Journal of Functional Analysis},
  volume={254},
  number={12},
  pages={3093--3122},
  year={2008},
  publisher={Elsevier}
}

@article {MR2231874,
    AUTHOR = {Magnanini, R. and Prajapat, J. and Sakaguchi, S.},
     TITLE = {Stationary isothermic surfaces and uniformly dense domains},
   JOURNAL = {Trans. Amer. Math. Soc.},
  FJOURNAL = {Transactions of the American Mathematical Society},
    VOLUME = {358},
      YEAR = {2006},
    NUMBER = {11},
     PAGES = {4821--4841},
      ISSN = {0002-9947,1088-6850},
   MRCLASS = {58E12 (35K05 35K15 53A10)},
  MRNUMBER = {2231874},
MRREVIEWER = {Sergey\ G.\ Pyatkov},
       DOI = {10.1090/S0002-9947-06-04145-6},
       URL = {https://doi.org/10.1090/S0002-9947-06-04145-6},
}

@article{vogel1992symmetry_and_regularity,
    author = {A. L. Vogel},
    title =  {Symmetry and regularity for general regions having a solution to certain overdetermined boundary value problems},
    journal = {Atti Sem. Mat. Fis. Univ. Modena},
    year = 1992,
    volue=40,  
    number=2,
    pages={443--484}
}

@article{jerison_kenig1995inhomogeneous,
  title={The inhomogeneous {D}irichlet problem in {L}ipschitz domains},
  author={Jerison, David and Kenig, Carlos E},
  journal={Journal of Functional Analysis},
  volume={130},
  number={1},
  pages={161--219},
  year={1995},
  publisher={Academic Press}
}

@Misc{amsmath,
  author =	 {{American Mathematical Society}},
  title =	 {User's Guide for the \texttt{amsmath} Package
                  (Version 2.0)},
  url =		 {ftp://ftp.ams.org/pub/tex/doc/amsmath/amsldoc.pdf},
  urldate =	 {2015-07-30},
  year =	 2002}

@article{aleksandrov1958uniqueness,
  title={Uniqueness theorems for surfaces in the large {V}},
  author={Aleksandrov, Aleksandr D},
  journal={Amer. Math. Soc. Transl.(2)},
  volume={21},
  pages={412--416},
  year={1962},
 note={translated from Vestnik Leningrad Univ. 19:13 (1958) 5--8}
}

@article{serrin1971,
    author = {James Serrin},
    title = {A symmetry problem in potential theory},  
    doi={https://doi.org/10.1007/BF00250468},
    journal = {Archive for Rational Mechanics and Analysis},
    volume={43},
    year = {1971},
    pages={304--318}
}

@book{mclean2000strongly,
  title={Strongly elliptic systems and boundary integral equations},
  author={McLean, William Charles Hector},
  year={2000},
  publisher={Cambridge University Press}
}

@article{sakaguchibessatsu,
    author = {Sakaguchi, Shigeru},
    title = {Two-phase heat conductors with a
stationary isothermic surface and their
related elliptic overdetermined problems
},
    journal = {RIMS K{\^o}ky{\^u}roku Bessatsu},
    year = {2020},
    volume={B80},
    pages={113--132}
}

@article {MR3859464,
    AUTHOR = {Savo, Alessandro},
     TITLE = {Geometric rigidity of constant heat flow},
   JOURNAL = {Calc. Var. Partial Differential Equations},
  FJOURNAL = {Calculus of Variations and Partial Differential Equations},
    VOLUME = {57},
      YEAR = {2018},
    NUMBER = {6},
     PAGES = {Paper No. 156, 26},
      ISSN = {0944-2669,1432-0835},
   MRCLASS = {58J35 (35P15 35R01)},
  MRNUMBER = {3859464},
MRREVIEWER = {Qihua\ Ruan},
       DOI = {10.1007/s00526-018-1434-7},
       URL = {https://doi.org/10.1007/s00526-018-1434-7},
}

@book {MR768584,
    AUTHOR = {Chavel, Isaac},
     TITLE = {Eigenvalues in {R}iemannian geometry},
    SERIES = {Pure and Applied Mathematics},
    VOLUME = {115},
      NOTE = {Including a chapter by Burton Randol,
              With an appendix by Jozef Dodziuk},
 PUBLISHER = {Academic Press, Inc., Orlando, FL},
      YEAR = {1984},
     PAGES = {xiv+362},
      ISBN = {0-12-170640-0},
   MRCLASS = {58G25 (35P99 53C20)},
  MRNUMBER = {768584},
MRREVIEWER = {G\'erard\ Besson},
}

@article {MR2434936,
    AUTHOR = {Cecil, Thomas E.},
     TITLE = {Isoparametric and {D}upin hypersurfaces},
   JOURNAL = {SIGMA Symmetry Integrability Geom. Methods Appl.},
  FJOURNAL = {SIGMA. Symmetry, Integrability and Geometry. Methods and
              Applications},
    VOLUME = {4},
      YEAR = {2008},
     PAGES = {Paper 062, 28},
      ISSN = {1815-0659},
   MRCLASS = {53C40},
  MRNUMBER = {2434936},
MRREVIEWER = {Ronaldo\ Alves\ Garcia},
       DOI = {10.3842/SIGMA.2008.062},
       URL = {https://doi.org/10.3842/SIGMA.2008.062},
}

@article{Sakaguchi_hyperplanes2020,
title = {Some characterizations of parallel hyperplanes in multi-layered heat conductors},
journal = {Journal de Math\'ematiques Pures et Appliqu\'ees},
volume = {140},
pages = {185--210},
year = {2020},
issn = {0021-7824},
doi = {https://doi.org/10.1016/j.matpur.2020.06.007},
url = {https://www.sciencedirect.com/science/article/pii/S0021782420301057},
author = {Shigeru Sakaguchi},
}

@article{BisSavo,
    author = {Bisterzo, Andrea and Savo, Alessandro},
    title = {Rigidity of an overdetermined heat equation and minimal helicoids in space-forms},
    journal ={arXiv preprint arXiv:2507.08389},
    year=2025
}

@article{cavallina_poggesi2025elliptic&parabolic,
  title={Elliptic and parabolic overdetermined problems in multi-phase settings},
  author={Cavallina, Lorenzo and Poggesi, Giorgio},
  journal={arXiv preprint arXiv:2504.18808},
  year={2025}
}

@article{weinberger1971remark,
  title={Remark on the preceding paper of {S}errin},
  author={Weinberger, Hans F},
  journal={Archive for Rational Mechanics and Analysis},
  volume={43},
  number={4},
  pages={319--320},
  year={1971},
  publisher={Springer}
}

@article{fall&minled&weth2018serrin_on_the_sphere,
  title={Serrin’s overdetermined problem on the sphere},
  author={Fall, Mouhamed Moustapha and Minlend, Ignace Aristide and Weth, Tobias},
  journal={Calculus of Variations and Partial Differential Equations},
  volume={57},
  number={1},
  pages={3},
  year={2018},
  publisher={Springer}
}

@article{prajapat1998serrin,
  title={{S}errin’s result for domains with a corner or cusp},
  author={Prajapat, Jyotshana},
  journal={Duke Math. J.},
  volume={95},
  number={1},
  pages={29--31},
  year={1998}
}

@book{GilbargTrudinger,
    author = {Gilbarg, David and Trudinger, Neil S.},
    title = {Elliptic Partial Differential Equations of Second Order},
    publisher = {Springer Berlin, Heidelberg},
    series={Classics in Mathematics},
    year = 2001,
    edition= {2nd}
}

@article{savo2016heat,
  title={Heat flow, heat content and the isoparametric property},
  author={Savo, Alessandro},
  journal={Mathematische Annalen},
  volume={366},
  number={3},
  pages={1089--1136},
  year={2016},
  publisher={Springer}
}

@article {MR1860493,
    AUTHOR = {Garofalo, Nicola and Sartori, Elena},
     TITLE = {Symmetry in a free boundary problem for degenerate parabolic
              equations on unbounded domains},
   JOURNAL = {Proc. Amer. Math. Soc.},
  FJOURNAL = {Proceedings of the American Mathematical Society},
    VOLUME = {129},
      YEAR = {2001},
    NUMBER = {12},
     PAGES = {3603--3610},
      ISSN = {0002-9939,1088-6826},
   MRCLASS = {35R35 (35K20 35K55 35K65)},
  MRNUMBER = {1860493},
MRREVIEWER = {Domingo\ A.\ Tarzia},
       DOI = {10.1090/S0002-9939-01-05993-7},
       URL = {https://doi.org/10.1090/S0002-9939-01-05993-7},
}

@article {Garofalo,
    AUTHOR = {Garofalo, Nicola and Lewis, John L.},
     TITLE = {A symmetry result related to some overdetermined boundary
              value problems},
   JOURNAL = {Amer. J. Math.},
  FJOURNAL = {American Journal of Mathematics},
    VOLUME = {111},
      YEAR = {1989},
    NUMBER = {1},
     PAGES = {9--33},
      ISSN = {0002-9327,1080-6377},
   MRCLASS = {35N10 (35J60)},
  MRNUMBER = {980297},
MRREVIEWER = {V.\ S.\ Rabinovich},
       DOI = {10.2307/2374477},
       URL = {https://doi.org/10.2307/2374477},
}

@article {MR3955113,
    AUTHOR = {Fogagnolo, Mattia and Mazzieri, Lorenzo and Pinamonti, Andrea},
     TITLE = {Geometric aspects of {$p$}-capacitary potentials},
   JOURNAL = {Ann. Inst. H. Poincar\'e{} C Anal. Non Lin\'eaire},
  FJOURNAL = {Annales de l'Institut Henri Poincar\'e{} C. Analyse Non
              Lin\'eaire},
    VOLUME = {36},
      YEAR = {2019},
    NUMBER = {4},
     PAGES = {1151--1179},
      ISSN = {0294-1449,1873-1430},
   MRCLASS = {35J92 (31C45)},
  MRNUMBER = {3955113},
MRREVIEWER = {Juha\ K.\ Kinnunen},
       DOI = {10.1016/j.anihpc.2018.11.005},
       URL = {https://doi.org/10.1016/j.anihpc.2018.11.005},
}

@incollection {MR3893584,
    AUTHOR = {Magnanini, Rolando},
     TITLE = {Alexandrov, {S}errin, {W}einberger, {R}eilly: simmetry and
              stability by integral identities},
 BOOKTITLE = {Bruno {P}ini {M}athematical {A}nalysis {S}eminar 2017},
    SERIES = {Bruno Pini Math. Anal. Semin.},
    VOLUME = {8},
     PAGES = {121--141},
 PUBLISHER = {Univ. Bologna, Alma Mater Stud., Bologna},
      YEAR = {2017},
   MRCLASS = {35N25 (35-02 35A23 35B35 53A10)},
  MRNUMBER = {3893584},
}

@article {MR3802818,
    AUTHOR = {Nitsch, C. and Trombetti, C.},
     TITLE = {The classical overdetermined {S}errin problem},
   JOURNAL = {Complex Var. Elliptic Equ.},
  FJOURNAL = {Complex Variables and Elliptic Equations. An International
              Journal},
    VOLUME = {63},
      YEAR = {2018},
    NUMBER = {7-8},
     PAGES = {1107--1122},
      ISSN = {1747-6933,1747-6941},
   MRCLASS = {35N25 (26D15 35B50 35J05)},
  MRNUMBER = {3802818},
       DOI = {10.1080/17476933.2017.1410798},
       URL = {https://doi.org/10.1080/17476933.2017.1410798},
}

@article {MR2448319,
    AUTHOR = {Brandolini, B. and Nitsch, C. and Salani, P. and Trombetti,
              C.},
     TITLE = {Serrin-type overdetermined problems: an alternative proof},
   JOURNAL = {Arch. Ration. Mech. Anal.},
  FJOURNAL = {Archive for Rational Mechanics and Analysis},
    VOLUME = {190},
      YEAR = {2008},
    NUMBER = {2},
     PAGES = {267--280},
      ISSN = {0003-9527,1432-0673},
   MRCLASS = {35N10 (35B06 35J60)},
  MRNUMBER = {2448319},
MRREVIEWER = {Maria\ Santos\ Bruz\'on},
       DOI = {10.1007/s00205-008-0119-3},
       URL = {https://doi.org/10.1007/s00205-008-0119-3},
}

@article {MR2545870,
    AUTHOR = {Cianchi, Andrea and Salani, Paolo},
     TITLE = {Overdetermined anisotropic elliptic problems},
   JOURNAL = {Math. Ann.},
  FJOURNAL = {Mathematische Annalen},
    VOLUME = {345},
      YEAR = {2009},
    NUMBER = {4},
     PAGES = {859--881},
      ISSN = {0025-5831,1432-1807},
   MRCLASS = {35N10 (35J35)},
  MRNUMBER = {2545870},
       DOI = {10.1007/s00208-009-0386-9},
       URL = {https://doi.org/10.1007/s00208-009-0386-9},
}

@article {MR2232009,
    AUTHOR = {Fragal\`a, Ilaria and Gazzola, Filippo and Kawohl, Bernd},
     TITLE = {Overdetermined problems with possibly degenerate ellipticity,
              a geometric approach},
   JOURNAL = {Math. Z.},
  FJOURNAL = {Mathematische Zeitschrift},
    VOLUME = {254},
      YEAR = {2006},
    NUMBER = {1},
     PAGES = {117--132},
      ISSN = {0025-5874,1432-1823},
   MRCLASS = {35J60 (35B50 35J25 35J70 35N10)},
  MRNUMBER = {2232009},
MRREVIEWER = {Luisa\ Moschini},
       DOI = {10.1007/s00209-006-0937-7},
       URL = {https://doi.org/10.1007/s00209-006-0937-7},
}

@article {MR2366129,
    AUTHOR = {Farina, A. and Kawohl, B.},
     TITLE = {Remarks on an overdetermined boundary value problem},
   JOURNAL = {Calc. Var. Partial Differential Equations},
  FJOURNAL = {Calculus of Variations and Partial Differential Equations},
    VOLUME = {31},
      YEAR = {2008},
    NUMBER = {3},
     PAGES = {351--357},
      ISSN = {0944-2669,1432-0835},
   MRCLASS = {35N10 (35J65)},
  MRNUMBER = {2366129},
       DOI = {10.1007/s00526-007-0115-8},
       URL = {https://doi.org/10.1007/s00526-007-0115-8},
}

@article {MR2375699,
    AUTHOR = {Magnanini, Rolando and Sakaguchi, Shigeru},
     TITLE = {Stationary isothermic surfaces for unbounded domains},
   JOURNAL = {Indiana Univ. Math. J.},
  FJOURNAL = {Indiana University Mathematics Journal},
    VOLUME = {56},
      YEAR = {2007},
    NUMBER = {6},
     PAGES = {2723--2738},
      ISSN = {0022-2518,1943-5258},
   MRCLASS = {35K05 (35B05 35K20)},
  MRNUMBER = {2375699},
MRREVIEWER = {Neil\ A.\ Watson},
       DOI = {10.1512/iumj.2007.56.3150},
       URL = {https://doi.org/10.1512/iumj.2007.56.3150},
}

@article{CavallinaMagnaniniSakaguchi2021constantflowproperty,
  title={Two-phase heat conductors with a surface of the constant flow property},
  author={Cavallina, Lorenzo and Magnanini, Rolando and Sakaguchi, Shigeru},
  journal={The Journal of Geometric Analysis},
  volume={31},
  number={1},
  pages={312--345},
  year={2021},
  publisher={Springer}
}

@article{sakaguchi2016rendiconti,
    author = {Sakaguchi, Shigeru},
    title = {Two-phase heat conductors with
a stationary isothermic surface},
    journal = {Rend. Istit. Mat. Univ. Trieste},
    year = 2016,
    volume=48,
    pages={167--187}
}
\bigskip

\noindent
\textsc{Lorenzo Cavallina:}\\
\noindent
\textsc{
Mathematical Institute, Tohoku University, Aoba-ku, 
Sendai 980-8578, Japan}\\
\noindent
{\em Electronic mail address:}
cavallina.lorenzo.e6@tohoku.ac.jp

\bigskip

\noindent
\textsc{Andrea Pinamonti:}\\
\noindent
\textsc{Dipartimento di Matematica, Universit\`a di Trento,
Via Sommarive, 14, 38123 Povo TN, Italy
}
\\
\noindent
{\em Electronic mail address:}
andrea.pinamonti@unitn.it

\end{document}